 \newtheorem{thm}{Theorem}[section]
 \newtheorem{cor}[thm]{Corollary}
 \newtheorem{lem}[thm]{Lemma}
 \newtheorem{prop}[thm]{Proposition}
 \newtheorem{defn}[thm]{Definition}
 \newtheorem{rem}[thm]{Remark}
 \numberwithin{equation}{section}
\begin{document}

\title[] {Slant and Legendre Null  Curves in 3-Dimensional Sasaki-like Almost
 Contact  B-Metric  Manifolds}

    %    author one information
    \author{Galia Nakova}
    \address{Galia Nakova, Affiliation}    \curraddr{University of Veliko Tarnovo "St. Cyril and St. Methodius" \\ Faculty of Mathematics and Informatics
\\ 2 Teodosii Tarnovski Str., Veliko Tarnovo 5003, Bulgaria}
    \email{gnakova@gmail.com}
    
    % author two information
    \author{\fbox{Simeon Zamkovoy}}
%     \address{Second, Affiliation}
%     \curraddr{Address, Second}
%     \email{anybody@somewhere.com}
 
% \author[G. Nakova]{Galia Nakova$^{1}$}
% %{In memory of Professor Simeon Zamkovoy, \\ University of Sofia, Bulgaria}
% 
% \address{
% University of Veliko Tarnovo "St. Cyril and St. Methodius" \\ Faculty of Mathematics and Informatics
% \\ 2 Teodosii Tarnovski Str., Veliko Tarnovo 5003, Bulgaria}
% \email{gnakova@gmail.com}
% \author[S. Zamkovoy]{\fbox{Simeon Zamkovoy$^{2}$}}

\keywords{Sasaki-like almost contact B-metric manifolds, Slant curves, Null curves}

%\address[1]{Medical University of Plovdiv\br Faculty of Pharmacy\br
%Department of Pharmaceutical Sciences\br
%Section of Mathematics and IT\br 15-A Vasil Aprilov Blvd.\br   Plovdiv 4002\br   Bulgaria}
%\address[2]{Paisii Hilendarski University of Plovdiv\br   Faculty of Mathematics and
%Informatics\br   Department of Algebra and Geometry\br   236
%Bulgaria Blvd.\br   Plovdiv 4027\br   Bulgaria}
%\email{hmanev@uni-plovdiv.bg}

%\thanks{The second author was partially supported by projects NI15-FMI-004 and  MU15-FMIIT-008
%of the Scientific Research Fund at the University of Plovdiv.}

%----------classification, keywords, date
\subjclass{53C15, 53C50}

%\date{January 1, 2004}
%----------additions
%\dedicatory{To my boss}
%%% ----------------------------------------------------------------------

In memory of Professor Simeon Zamkovoy, \\ University of Sofia, Bulgaria

\begin{abstract}
Object of study in the present paper are slant and Legendre null curves in 3-dimensional Sasaki-like almost contact B-metric manifolds. For the examined curves we express the general Frenet frame and the Frenet frame  for which the original parameter is distinguished, as well as the corresponding curvatures, in terms of the structure on the manifold. We prove that the curvatures  of a framed null slant and Legendre curve are constants if and only if a specific function for the considered  manifolds is a constant. We find a necessary and sufficient condition a slant null curve to be a generalized helix and a Legendre null curve to be a null cubic. For some of investigated curves we show that they are non-null slant or Legendre curves
with respect to the associated B-metric on the manifold. We give examples of the examined curves. 
Some of them are constructed in a 3-dimensional Lie group as Sasaki-like manifold and  their matrix representation is obtained.
\end{abstract}

\newcommand{\ie}{i.\,e. }
\newcommand{\g}{\mathfrak{g}}
\newcommand{\D}{\mathcal{D}}
\newcommand{\F}{\mathcal{F}}
\newcommand{\diag}{\mathrm{diag}}
\newcommand{\End}{\mathrm{End}}
\newcommand{\im}{\mathrm{Im}}
\newcommand{\id}{\mathrm{id}}
\newcommand{\Hom}{\mathrm{Hom}}

\newcommand{\Rad}{\mathrm{Rad}}
\newcommand{\rank}{\mathrm{rank}}
\newcommand{\const}{\mathrm{const}}
\newcommand{\tr}{{\rm tr}}
\newcommand{\ltr}{\mathrm{ltr}}
\newcommand{\codim}{\mathrm{codim}}
\newcommand{\Ker}{\mathrm{Ker}}
\newcommand{\R}{\mathbb{R}}
\newcommand{\E}{\mathbb{E}}
\newcommand{\K}{\mathbb{K}}

\newcommand{\thmref}[1]{Theorem~\ref{#1}}
\newcommand{\propref}[1]{Proposition~\ref{#1}}
\newcommand{\corref}[1]{Corollary~\ref{#1}}
\newcommand{\secref}[1]{\S\ref{#1}}
\newcommand{\lemref}[1]{Lemma~\ref{#1}}
\newcommand{\dfnref}[1]{Definition~\ref{#1}}
%\newcommand{\eqref}[1]{(\ref{#1})}

%\frenchspacing

\newcommand{\ee}{\end{equation}}
\newcommand{\be}[1]{\begin{equation}\label{#1}}

%%% ----------------------------------------------------------------------
\maketitle
%%% ----------------------------------------------------------------------
%\tableofcontents
%%%%%%%%%%%%%%%%%%%%%%%%%%%%%%%%%%%%%%%%%%%%%%%%%%%%%%%%%%%%%%%%%%%%%%%%%%%0

\section{Introduction}\label{sec-1}
In the Lorentzian geometry there exist three types of curves according to the causal character of their tangent vector -- spacelike, timelike and null (lightlike) curves. Studying the geometry of null curves is of special interest since they have very different properties compared to spacelike and timelike curves. 
The general theory of null curves is developed in \cite{D-B, D-J},
where there are established important applications of these curves in general relativity. \\
%Let $\bf{F}$ be a Frenet frame along a null curve $C$ on a Lorentzian manifold. According to \cite{D-B}, $\bf{F}$ and the Frenet equations with respect to $\bf{F}$ depend on both the parametrization of $C$ and the choice of a screen vector bundle. However, if a non-geodesic
%null curve $C$ is  properly parameterized, then there exists only one Frenet frame, called a Cartan Frenet frame, for which
%the corresponding Frenet equations of $C$, called Cartan Frenet equations, have minimum number of curvature functions (\cite{D-B}, \cite{D-J}).\\
A slant curve in almost contact and almost paracontact geometry arises as a generalization of a curve of constant slope (also called a cylindrical helix) in an Euclidean space $\E^3$. More precisely, a slant curve $C(t)$ is defined by the condition the scalar product $g(\dot C(t),\xi )$ of the tangent vector field $\dot C(t)$ and the Reeb vector field $\xi $ to be a constant. Therefore, slant curves can be also viewd as a generalization of Legendre curves for which $g(\dot C(t),\xi )=0$. Slant and Legendre curves in almost contact metric and almost paracontact metric manifolds have been investigated  intensively by many  authors \cite{I-L, W} and the references therein.\\ 
In this paper we consider slant and Legendre null curves in 3-dimensional Sasaki-like almost contact B-metric manifolds, which are Lorentzian manifolds. Sasaki-like almost contact B-metric manifolds belong to the class $\F_4$ of the Ganchev-Mihova-Gribachev classification \cite{GaMGri} and they are an analog to the class of Sasakian manifolds in the theory of almost contact metric manifolds. It is known that the $(1,1)$-tensor $\varphi $ of an almost contact metric manifold
(with a Riemannian or pseudo-Riemannian metric ) acts as an isometry with respect to the metric on the contact distribution. A distinguishing feature of an almost contact B-metric manifold  from an almost contact metric manifold is that this action of $\varphi $ is an anti-isometry. Also, the metric on an almost contact B-metric manifold is necessarily pseudo-Riemannian (known as a B-metric) and its restriction on the contact distribution is of neutral signature. Moreover, on a such manifold there exists a second B-metric, called an associated B-metric. In these reasons  the geometry of Sasaki-like almost contact B-metric manifolds is different from this  of Sasaki manifolds.\\
The paper is organized as follows.
Section 2 is a brief review of almost contact B-metric manifolds and geometry of null curves in a 3-dimensional Lorentzian manifold.
In Section 3  we investigate slant null curves  in a 3-dimensional Sasaki-like almost contact B-metric manifold. Since in general the slant  curves are not invariant under a reparameterization, we study such curves with respect to the original parameter. For the examined curves we find the general Frenet frame 
${\bf F}$ and the Frenet frame ${\bf\overline F}$ for which the original parameter is distinguished, as well as the curvatures with respect to ${\bf F}$ and ${\bf\overline F}$. The main result is Theorem \ref{Theorem 3.11} where we prove that both curvatures  $\overline k_1$ and $\overline k_2$ of a framed null slant curve $C$ are constants if and only if the function $b=g(\dot C,\varphi \dot C)$ ($b$ is a nonzero function in general in contrast to the almost contact metric and almost paracontact metric manifolds) is a constant on $C$. As a consequence from this theorem we obtain a necessary and sufficient condition a slant null curve to be a generalized helix. Section 4 is devoted to Legendre null curves. We determine for such curves the Frenet frames ${\bf F}$, ${\bf\overline F}$ and the corresponding  curvatures. We prove that there exist no geodesic Legendre null curves in a 3-dimensional Sasaki-like almost contact B-metric manifold. Also, we give a necessary and sufficient condition a Legendre null curve to be a null cubic. In Section 5 we consider some slant and Legendre null curves with respect to the one B-metric  in a 3-dimensional Sasaki-like almost contact B-metric manifold. We prove that with respect to 
the other B-metric these curves are slant and Legendre non-null curves, respectively, and characterize these curves geometrically. 
In the last Section 6 we give several examples illustrating relevant assertions in the previous sections. 
We construct slant null curves in a 3-dimensional Lie group which is a Sasaki-like almost contact B-metric manifold and find the matrix representation of considered curves.
\section{Preliminaries}\label{sec-2}
A $(2n+1)$-dimensional smooth manifold  $(M,\varphi,\xi ,\eta ,g)$ is called an almost contact manifold with B-metric  (or {\it an almost contact B-metric manifold}) \cite{GaMGri} if it is endowed with an almost contact structure $(\varphi ,\xi ,\eta )$ consisting of an endomorphism $\varphi $ of the tangent bundle, a Reeb vector field $\xi $  and its dual 1-form $\eta $, satisfying the following relations:
\begin{align*}
\varphi^2X=-X+\eta(X)\xi, \qquad \quad \eta(\xi )=1.
\end{align*}
Also, $M$ is equipped with a pseudo-Riemannian metric $g$, called {\it a B-metric} \cite{GaMGri}, determined by
\[
g(\varphi X,\varphi Y)=-g(X,Y)+\eta (X)\eta (Y).
\]
Here and further $X$, $Y$, $Z$ are tangent vector fields on $M$, \ie $X,Y,Z \in TM$.
Immediate consequences of the above conditions are:
\begin{align*}
\eta \circ \varphi =0, \quad \varphi \xi =0, \quad {\rm rank}(\varphi)=2n, \quad \eta (X)=
g(X,\xi ), \quad g(\xi,\xi )=1.
\end{align*}
The distribution $\mathbb {D}: x \in M \longrightarrow \mathbb {D}_x\subset T_xM$, where
\[
\mathbb D_x=Ker \eta=\{X_x\in T_xM: \eta (X_x)=0\}
\]
is called {\it a contact distribution} generated by 
$\eta $. Then the tangent space $T_xM$ at each $x\in M$ is the following orthogonal direct sum
\[
T_xM=\mathbb D_x\oplus span_\mathbb R\{\xi _x\} .
\]
%and every vector $x\in T_pM$ can be decomposed uniquely in the manner
%\begin{align}\label{3}
%x=hx+vx,
%\end{equation}
%where $hx=\varphi ^2x \in \mathbb{D}_p$ and $vx=\eta (x).\xi (p) \in span_\mathbb R\{\xi (p)\}$. 
The tensor field $\varphi $ induces an almost complex structure on each
fibre on $\mathbb D$. Since $g$ is non-degenerate metric on $M$ and $\xi $ is non-isotropic,
the contact distribution $\mathbb D$ is non-degenerate and the restriction $g_{\vert \mathbb D}$ of the metric $g$ on $\mathbb D$ is of signature $(n,n)$.
\\
The tensor field ${\widetilde g}$ of type $(0,2)$ given by 
${\widetilde g}(X,Y)=g(X,\varphi Y)+\eta (X)\eta (Y)$
is a B-metric, called {\it an associated metric} to $g$. Both metrics $g$ and 
${\widetilde g}$ are necessarily of signature $(n+1,n)$ 
%The metric $\overline {\widetilde g}$ is said to be {\it an associated metric} of $\overline g$.
%Throughout this paper, for the orthonormal basis the signature of the metric $g$ will be of the form 
$(+\ldots + -\ldots -)$.
\\
Let $\nabla$ be the Levi-Civita connection of $g$. 
%We consider the symmetric tensor field $\Phi $ of type $(1,2)$ defined by
%$\Phi (X,Y)=\overline {\widetilde \nabla }_XY-\overline \nabla _XY$.
%Since $\overline {\widetilde \nabla }$ and $\overline \nabla $ are torsion-free linear connections, the tensor field $\Phi $ is . The corresponding to $\Phi $ tensor field of type $(0,3)$ is given by
%\[
%\Phi (X,Y,Z)=\overline g(\Phi (X,Y),Z) , \quad X,Y,Z \in T\overline M.
%\]
The tensor field $F$ of type $(0,3)$ on $M$ is defined by
$F(X,Y,Z)=g((\nabla_X\varphi)Y,Z)$ %\quad X,Y,Z \in \Gamma (TM)$
and it has the following properties:
\[
F(X,Y,Z)=F(X,Z,Y)=F(X,\varphi Y,\varphi Z)+\eta (Y)F(X,\xi,Z)+\eta (Z)F(X,Y,\xi ).
\]
Moreover, we have
\begin{align}\label{2.1}
F(X,\varphi Y,\xi )=(\nabla _X\eta )Y=g(\nabla _X\xi,Y).
\end{align}
%The following 1-forms, called \emph{Lee forms}, are associated with $F$:
%\[
%\theta (X)=g^{ij}F(e_i,e_j,X), \quad \theta ^*(X)=g^{ij}F(e_i,\varphi e_j,X), \quad
%\omega (X)=F(\xi,\xi,X),
%\]
%where $\{e_i\}, \, i=\{1,\ldots,2n+1\}$ is a basis of $T_xM$, $x\in M$, and $(g^{ij})$ is the inverse matrix of $(g_{ij})$.
A classification of the almost contact B-metric manifolds with respect to $F$ is given in \cite{GaMGri} and
eleven basic classes  $\F_i$ $(i=1,2,\dots,11)$ are obtained. 
\\
The special class $\F_0$ is the intersection of all basic classes and it is determined by the condition $F=0$.
\\
The lowest possible dimension of the considered manifolds is three. The class of the 3-dimensional almost contact B-metric manifolds is
$\F_1\oplus \F_4\oplus \F_5\oplus \F_8\oplus \F_9\oplus \F_{10}\oplus \F_{11}$ \cite{GaMGri}. 
\\
In this paper we consider 3-dimensional Sasaki-like almost contact B-metric manifolds  (also known as almost contact complex Riemannian manifolds) which were introduced in \cite{IMM}. A Sasaki-like almost contact B-metric manifold is determined by the condition
\[
(\nabla _X\varphi )Y=-g(X,Y)\xi -\eta (Y)X+2\eta(X)\eta(Y)\xi ,
\]
or equivalently
%belonging to the class $\F_4$, which is determined by (see \cite{GaMGri})
\begin{align}\label{2.2}
\begin{array}{ll}
F(X,Y,Z)=g(\varphi X,\varphi Y)\eta (Z)+g(\varphi X,\varphi Z)\eta (Y). 
\end{array}
\end{align}
Any Sasaki-like almost contact B-metric manifold belongs to the class $\F_4$ \cite{GaMGri}.
Taking into account \eqref{2.1} and \eqref{2.2} for Sasaki-like manifold we have 
\begin{align}\label{2.3}
  \nabla _X\xi=-\varphi X .
\end{align}
\\
Let ${\widetilde \nabla }$ be the Levi-Civita connection of ${\widetilde g}$. We consider the symmetric tensor field $\Phi $ of type $(1,2)$ defined by
$\Phi (X,Y)={\widetilde \nabla }_XY-\nabla _XY$. For a 3-dimensional Sasaki-like  manifold the following equality holds \cite{MM}: 
\begin{align}\label{2.4}
{\widetilde \nabla }_XY-\nabla _XY=-\{g(X,\varphi Y)-g(\varphi X,\varphi Y)\}\xi .
\end{align}
%\begin{equation}\label{2.5}
%\begin{array}{llll}
%\widetilde R(X,Y,Z)=R(X,Y,Z)\\
%-\frac{\theta ^2(\xi )}{4}\left\{g(\varphi Y,\varphi Z)\varphi X-g(\varphi X,\varphi Z)\varphi Y
%-g(Y,\varphi Z)\varphi X\right.\\ \\
%\left.+g(X,\varphi Z)\varphi Y
%+[\eta (X)g(Y,Z)-\eta (Y)g(X,Z)]\xi \right.\\ \\
%\left.-[\eta (X)g(Y,\varphi Z)-\eta (Y)g(X,\varphi Z)]\xi \right\}\\ \\
%+\frac{X\theta (\xi )}{2}\left\{g(Y,\varphi Z)-g(\varphi Y,\varphi Z)\right\}\xi \\ \\
%-\frac{Y\theta (\xi )}{2}\left\{g(X,\varphi Z)-g(\varphi X,\varphi Z)\right\}\xi ,
%\end{array}
%\end{equation}
%where $R$ and $\widetilde R$ are the curvature tensors of $\nabla $ and $\widetilde \nabla $, respectively.
Let us remark that a 3-dimensional almost contact B-metric manifold is a Lorentzian manifold. In the remainder of this section we briefly recall the main notions about null curves in a 3-dimensional Lorentzian manifold $M$ for which we refer to \cite{D-B, D-J}.

Let %$(M, g)$ be a 3-dimensional Lorentzian manifold and
$C: I\longrightarrow M$ be a smooth curve in $M$ given locally by
\[
x_i=x_i(t), \quad t\in I\subseteq {\R}, \quad i\in \{1,2,3\}
\]
for a coordinate neighborhood $U$ of $C$. The tangent vector field is given by
\[
\frac{{\rm d}}{{\rm d}t}=(\dot {x}_1, \dot {x}_2, \dot {x}_3)=\dot {C},
\]
where we denote $\frac{{\rm d}x_i}{{\rm d}t}$ by $\dot {x}_i$ for $i\in \{1,2,3\}$.\\  
The smooth curve $C$ is said to be {\it a null (lightlike) curve} in $(M, g)$, if at each point $x$ of $C$ we have
\begin{align}\label{2'}
g(\dot {C},\dot {C})=0,\qquad \dot {C}\neq 0 \quad \text{for}\quad \forall t\in I.
\end{align}
%\textcolor[rgb]{1.00,0.00,1.00}{\emph{Spored men e dobre da se napishe. \eqref{3'} bjaha dosta sled tova, a i trjabva da se pripomnjat nakratko oznachenijata, ne navsjakyde sa ednakvi. Kak pishem vseki pyt za kontakstnata struktura.}}\\
%
A general Frenet frame on $M$ along $C$ is denoted by ${\bf F}=\{\dot {C}, N, W\}$ and the vector fields in ${\bf F}$ are determined by
\begin{align}\label{3'}
g(\dot {C},N)=g(W,W)=1, \quad g(N,N)=g(N,W)=g(\dot {C},W)=0.
\end{align}
In \cite[Theorem 1.1, p. 53]{D-B} it was proved that for a given $W$ there exists a unique $N$ satisfying
the corresponding equalities in \eqref{3'}.
The general Frenet equations with respect to ${\bf F}$ and $\nabla $ of $(M, g)$ are known from \cite{D-B, D-J} %
\begin{align}\label{2.7}
\nabla _{\dot {C}}\dot {C}=h\dot {C}+k_1W ,
\end{align}
\begin{align}\label{2.8}
\nabla _{\dot {C}}N=-hN+k_2W ,
\end{align}
\begin{align}\label{2.9}
\nabla _{\dot {C}}W=-k_2\dot {C}-k_1N,
\end{align}
where %$h$ and $\{k_1, k_2\}$
$h$, $k_1$ and  $k_2$
are smooth functions on $U$. The functions $k_1$ and  $k_2$ are called {\it curvature functions} of $C$.
\\
The general Frenet frame ${\bf F}$ and its general Frenet equations \eqref{2.7}, \eqref{2.8}, \eqref{2.9} are not unique as they depend on the parameter and the choice of the
screen vector bundle $S(TC^\bot )={\rm span}\{W\}$ of $C$ (for details see \cite[pp. 56-58]{D-B}, \cite[pp. 25-29]{D-J}). It is known \cite[p. 58]{D-B} that there exists a
parameter $p$ called a {\it distinguished parameter}, for which the function $h$ vanishes in \eqref{2.8}. The pair $(C(p), {\bf F})$, where ${\bf F}$ is a Frenet frame along $C$ with respect to a distinguished parameter $p$, is called a {\it framed null curve} (see \cite{D-J}). In general, 
$(C(p), {\bf F})$ is not unique since it depends on both $p$ and the screen distribution. A Frenet frame with the minimum number of curvature functions is called {\it Cartan Frenet frame} of a null curve $C$. In \cite{D-J} it is proved that if the null curve $C(p)$ is non-geodesic
%with respect to a distinguished parameter $p$ and $p$ is
such that  for $\ddot{C}=\frac{{\rm d}}{{\rm d}p}\dot{C}$ the condition $g(\ddot {C},\ddot {C})=k_1=1$ holds, then there exists only one Cartan Frenet frame
${\bf F}$ %$=\{\dot {C}, N, W\}$ for which $C(p)$ has
with the following Frenet equations
\begin{align}\label{2.10}
\begin{array}{lll}
\nabla _{\dot {C}}\dot {C}=W, \quad
\nabla _{\dot {C}}N=k_2W, \quad
\nabla _{\dot {C}}W=-k_2\dot {C}-N.
\end{array}
\end{align}
The latter equations are called the {\it Cartan Frenet equations} of $C(p)$ whereas $k_2$ is called a \emph{Cartan curvature or torsion function} and it is invariant upto a sign under Lorentzian transformations. A null curve together with its Cartan Frenet frame is called a {\it null Cartan curve}. Note that some authors \cite{H-I} term a framed null curve  $(C(p), {\bf F})$ a null Cartan curve and a Frenet frame 
${\bf F}$ along $C$ with respect to a distinguished parameter $p$ -- Cartan Frenet frame.  

\section{Slant null curves in 3-dimensional Sasaki-like almost contact B-metric manifolds}\label{sec-3}
\begin{defn}\label{Definition 3.1}
We say that a smooth curve $C(t)$ in an almost contact B-metric manifold $(M,\varphi,\xi,\eta,g)$ is {\it 
a slant curve} if 
\begin{align}\label{3.1}
g(\dot {C}(t),\xi )=\eta (\dot {C}(t))=a ,
\end{align}
where $a$ is a real constant.
 The curve $C$ is called a {\it Legendre curve} if $a=0$.
\end{defn}
Since for an almost contact B-metric manifold $M$ $g(X,\varphi X)$ is not zero in general, for the sake of brevity, we will use the following denotations
\[
b=g(\dot {C},\varphi \dot {C}), \quad \dot {C} (b)=\dot b ,
\]
where $b$ is a smooth function on a curve $C$ in $M$.
%Motivated by this fact we define a new type slant curves.
\begin{prop}\label{Proposition 3.2}
In a 3-dimensional almost contact B-metric manifold \\ $(M,\varphi,\xi,\eta,g)$ there exists no a slant null curve $C$ for which  $a=0$ and the function $b$ vanishes identically on 
$C$.
\end{prop}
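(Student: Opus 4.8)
The plan is to argue by contradiction: suppose such a curve $C$ exists, so that $\eta(\dot C) = a = 0$ and $b = g(\dot C, \varphi\dot C) \equiv 0$ on $C$, while also $g(\dot C, \dot C) = 0$ and $\dot C \neq 0$ (null condition \eqref{2'}). The key structural fact is that at each point the tangent space decomposes as $T_xM = \mathbb D_x \oplus \mathrm{span}_{\mathbb R}\{\xi_x\}$, and since $\eta(\dot C) = 0$ we have $\dot C \in \mathbb D$ pointwise. So I would restrict attention to the $2$-dimensional contact distribution $\mathbb D$, on which $g$ has signature $(1,1)$ and $\varphi$ acts as an almost complex structure that is a $g$-anti-isometry: $g(\varphi X, \varphi Y) = -g(X,Y)$ for $X, Y \in \mathbb D$.

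The heart of the argument is a linear-algebra computation on the plane $\mathbb D_x$. Write $u = \dot C(x) \in \mathbb D_x \setminus \{0\}$. The hypotheses say $g(u,u) = 0$ and $g(u, \varphi u) = 0$. Since $\{u, \varphi u\}$ would be a frame of $\mathbb D_x$ unless they are linearly dependent, I would first check: can $\varphi u$ be a scalar multiple of $u$? If $\varphi u = \lambda u$, then applying $\varphi$ and using $\varphi^2 u = -u$ (as $\eta(u)=0$) gives $\lambda^2 = -1$, impossible over $\mathbb R$; hence $\{u, \varphi u\}$ is a basis of $\mathbb D_x$. Now $g(u,u) = 0$, $g(u,\varphi u) = 0$, and $g(\varphi u, \varphi u) = -g(u,u) = 0$ by the anti-isometry property. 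Therefore $g$ vanishes on all of $\mathbb D_x$, contradicting that $g_{|\mathbb D}$ is nondegenerate (of signature $(1,1)$, in particular non-degenerate, as recalled in Section 2). This contradiction proves the proposition.

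The main obstacle — really the only subtlety — is making sure the degeneracy conclusion is genuinely forced, i.e. handling the case distinction on whether $u$ and $\varphi u$ are independent; once that is dispatched via $\varphi^2 u = -u$, the rest is immediate from bilinearity of $g$ and the anti-isometry identity. I would present it cleanly as: the vanishing of $g(u,u)$, $g(u,\varphi u)$, and $g(\varphi u,\varphi u)$ on the basis $\{u,\varphi u\}$ forces $g_{|\mathbb D_x} \equiv 0$, which is absurd. Note this argument is purely pointwise and algebraic and does not even use the Sasaki-like condition \eqref{2.2} or \eqref{2.3} — only the almost contact B-metric axioms — so the statement in fact holds for all $3$-dimensional almost contact B-metric manifolds, as the proposition asserts.
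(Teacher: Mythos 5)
Your proposal is correct and follows essentially the same route as the paper: restrict to the contact distribution $\mathbb D$, observe that $g(\dot C,\dot C)=g(\dot C,\varphi\dot C)=g(\varphi\dot C,\varphi\dot C)=0$ on the basis $\{\dot C,\varphi\dot C\}$ of $\mathbb D_x$, and derive a contradiction with the signature $(1,1)$ of $g_{\vert\mathbb D}$. The only (welcome) addition is that you explicitly justify the linear independence of $\dot C$ and $\varphi\dot C$ via $\varphi^2\dot C=-\dot C$, a step the paper asserts without proof.
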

\begin{proof} Let us assume that there exists a slant null curve $C$ in $M$ such that $a=0$ and $b$ vanishes identically on $C$. From $\eta (\dot {C})=\eta (\varphi \dot {C})=0$ it follows that $\dot {C}$ and $\varphi \dot {C}$ belong to the contact distribution $\mathbb {D}$ of $M$ along $C$. Since 
$\dot {C}$ and $\varphi \dot {C}$ are linearly independent, they form a basis of $\mathbb {D}$ at each point $x$ of $C$. Hence, for an arbitrary vector field $X \in \mathbb {D}_{\vert C}$ we have $X=u\dot {C}+v\varphi \dot {C}$ for some functions $u$ and $v$. By using $g(\dot {C},\dot {C})=g(\varphi \dot {C},\varphi \dot {C})=0$ and $b=0$ we obtain $g(X,X)=0$. The last implies a contradiction since $g_{\vert \mathbb D}$ is of signature $(1,1)$, which confirms our assertion.
\end{proof}
%Further in this section we deal with proper slant null curves, i.e. for them $a\neq 0$.
\begin{rem}\label{Remark 3.3}
Let $C(t)$ be a slant curve in an almost contact B-metric manifold $(M,\varphi,\xi,\eta,g)$. 
If we change the parameter $t$ of $C(t)$ with another parameter $p$, then we have $\dot C(p)=\dot C(t)\frac{{\rm d}t}{{\rm d}p}$. Hence \eqref{3.1} becomes
\begin{align*}
\begin{array}{ll}
g(\dot {C}(p),\xi )=\eta (\dot {C}(p))=\frac{{\rm d}t}{{\rm d}p}\eta (\dot {C}(t))=\frac{{\rm d}t}{{\rm d}p}a .
\end{array}
\end{align*}
Therefore $\eta (\dot {C}(p))$ is constant if and only if $t=\alpha p+\beta $, where $\alpha, \, \beta $ are constant. Since in general the slant  curves are not invariant under a reparameterization,
in the present paper we study slant null curves with respect to its original parameter.
\end{rem}
First, for latter use, we give the following  lemma
\begin{lem}\label{Lemma 3.4}
Let $C(t)$ be a slant null curve in a 3-dimensional Sasaki-like almost contact B-metric manifold. Then we have
\begin{align}\label{3.2}
g(\nabla _{\dot {C}}\dot {C},\xi )=-g(\dot {C},\nabla _{\dot {C}}\xi )=b ,
\end{align}
\begin{align}\label{3.3}
g(\nabla _{\dot {C}}\dot {C},\dot {C})=0 ,
\end{align}
\begin{align}\label{3.4}
g(\nabla _{\dot {C}}\dot {C},\varphi \dot {C})=\frac{\dot b-2a^3}{2}, \quad 
g(\nabla _{\dot {C}}\varphi \dot {C},\dot {C})=\frac{\dot b+2a^3}{2},
\end{align}
\begin{align}\label{3.5}
g(\nabla _{\dot {C}}\varphi \dot {C},\xi)=-g(\varphi \dot {C},\nabla _{\dot {C}}\xi )=a^2 .
\end{align}
\end{lem}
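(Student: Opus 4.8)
The plan is to obtain each of \eqref{3.2}--\eqref{3.5} by differentiating along $C$ a scalar function that is either constant or identically zero, and then substituting the Sasaki-like structure equations, chiefly $\nabla_{\dot C}\xi=-\varphi\dot C$ from \eqref{2.3} and the formula \eqref{2.2} for $F$.

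For \eqref{3.2} I would differentiate the constant $g(\dot C,\xi)=a$ guaranteed by the slant condition \eqref{3.1}; metric compatibility of $\nabla$ gives $g(\nabla_{\dot C}\dot C,\xi)=-g(\dot C,\nabla_{\dot C}\xi)$, and \eqref{2.3} together with $g(\dot C,\varphi\dot C)=b$ turns the right-hand side into $b$. Equation \eqref{3.3} is immediate from $0=\dot C\bigl(g(\dot C,\dot C)\bigr)=2g(\nabla_{\dot C}\dot C,\dot C)$. For \eqref{3.5} I would use $\eta(\varphi\dot C)=0$, so $g(\varphi\dot C,\xi)\equiv 0$ along $C$; differentiating and applying \eqref{2.3} gives $g(\nabla_{\dot C}\varphi\dot C,\xi)=-g(\varphi\dot C,\nabla_{\dot C}\xi)=g(\varphi\dot C,\varphi\dot C)$, and the defining identity $g(\varphi X,\varphi Y)=-g(X,Y)+\eta(X)\eta(Y)$ evaluated on $\dot C$, using $g(\dot C,\dot C)=0$ and $\eta(\dot C)=a$, gives the value $a^2$.

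The one computation that needs a little care is \eqref{3.4}. First I would record two algebraic facts. (i) The tensor $\varphi$ is self-adjoint with respect to $g$, i.e. $g(\varphi X,Y)=g(X,\varphi Y)$: replace $Y$ by $\varphi Y$ in $g(\varphi X,\varphi Y)=-g(X,Y)+\eta(X)\eta(Y)$ and use $\varphi^2=-\id+\eta\otimes\xi$ together with $\eta\circ\varphi=0$ and $\eta(X)=g(X,\xi)$. (ii) By \eqref{2.2}, $g\bigl((\nabla_{\dot C}\varphi)\dot C,\dot C\bigr)=F(\dot C,\dot C,\dot C)=2g(\varphi\dot C,\varphi\dot C)\,\eta(\dot C)=2a^3$. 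Then I would differentiate $b=g(\dot C,\varphi\dot C)$ along $C$, expand $\nabla_{\dot C}(\varphi\dot C)=(\nabla_{\dot C}\varphi)\dot C+\varphi(\nabla_{\dot C}\dot C)$, and use (i) and (ii) to get $\dot b=2g(\nabla_{\dot C}\dot C,\varphi\dot C)+2a^3$, which is the first identity in \eqref{3.4}. The second follows by decomposing $g(\nabla_{\dot C}\varphi\dot C,\dot C)=g\bigl((\nabla_{\dot C}\varphi)\dot C,\dot C\bigr)+g\bigl(\varphi(\nabla_{\dot C}\dot C),\dot C\bigr)$ and substituting the two values already found.

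Since every identity reduces to differentiating an explicitly constant or vanishing quantity and invoking \eqref{2.2}, \eqref{2.3} and the pointwise B-metric relations, there is no serious obstacle; the point to watch is the sign bookkeeping in \eqref{3.4}, where it is precisely the self-adjointness of $\varphi$ for $g$ (in contrast to the almost contact metric case) that produces the $2a^3$ terms and hence the asymmetry between the two formulas.
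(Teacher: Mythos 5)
Your proof is correct and follows essentially the same route as the paper: each identity comes from differentiating $g(\dot C,\xi)=a$, $g(\dot C,\dot C)=0$, $g(\varphi\dot C,\xi)=0$ and $b=g(\dot C,\varphi\dot C)$ along $C$, using $\nabla_{\dot C}\xi=-\varphi\dot C$ and $F(\dot C,\dot C,\dot C)=2a^3$ from \eqref{2.2}. The only cosmetic difference is that you make the $g$-self-adjointness of $\varphi$ explicit, which the paper uses implicitly when combining the expressions for $F(\dot C,\dot C,\dot C)$ and $\dot b$ to solve for the two products in \eqref{3.4}.
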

\begin{proof}
Since \eqref{3.1} is valid, we have $\dot C(\eta (\dot C))=0$. Then by using \eqref{2.3} we obtain \eqref{3.2}. The equality \eqref{3.3} is an immediate consequence from \eqref{2'}. From the expressions
\begin{align*}
\begin{array}{ll}
F(\dot {C},\dot {C},\dot {C})=g(\nabla _{\dot {C}}\dot {C},\varphi \dot {C})-g(\varphi (\nabla _{\dot {C}}\dot {C}),\dot {C}) ,\\ \\
\dot b=g(\nabla _{\dot {C}}\dot {C},\varphi \dot {C})+g(\dot {C},\nabla _{\dot {C}}\varphi \dot {C})
\end{array}
\end{align*}
we find $g(\nabla _{\dot {C}}\dot {C},\varphi \dot {C})=\displaystyle\frac{\dot b-F(\dot C,\dot C,\dot C)}{2}$ and
$g(\nabla _{\dot {C}}\varphi \dot {C},\dot {C})=\displaystyle\frac{\dot b+F(\dot C,\dot C,\dot C)}{2}$.
By virtue of \eqref{2.2} we get $F(\dot C,\dot C,\dot C)=2a^3$ which confirms the truthfulness of \eqref{3.4}. The equality $g(\varphi \dot C,\xi )=0$ and \eqref{2.3} imply \eqref{3.5}.
\end{proof}
Now, taking into account Proposition \ref{Proposition 3.2}, it is easy to see that the triad of vector fields
$\{\xi, \dot {C}, \varphi \dot {C}\}$ is a basis of $T_xM$ at each point $x$ of a slant null curve in a 3-dimensional almost contact B-metric manifold $(M,\varphi,\xi,\eta,g)$. By using this basis, we obtain
\begin{prop}\label{Proposition 3.5}
Let $C(t)$ be a slant null curve in a 3-dimensional Sasaki-like almost contact B-metric manifold. Then 
$\nabla _{\dot {C}}\dot {C}$ and the curvature $k_1$ are given by 
\begin{align}\label{3.6}
\nabla _{\dot {C}}\dot {C}=\alpha \xi +\beta \dot {C}+\gamma \varphi \dot {C},
\end{align}
where $\alpha $, $\beta $, $\gamma $ are the following functions along $C$
\begin{align}\label{3.7}
\alpha =\frac{-b(-2(a^4+b^2)+a\dot b)}{2(a^4+b^2)} ,
\end{align}
\begin{align}\label{3.8}
\beta =\frac{b\dot b}{2(a^4+b^2)} ,
\end{align}
\begin{align}\label{3.9}
\gamma =\frac{a(-2(a^4+b^2)+a\dot b)}{2(a^4+b^2)}
\end{align}
and
\begin{align}\label{3.10}
k_1=\epsilon\frac{-2(a^4+b^2)+a\dot b}{2\sqrt{a^4+b^2}}, \quad \epsilon =\pm 1.
\end{align}
\end{prop}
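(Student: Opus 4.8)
The plan is to expand $\nabla_{\dot C}\dot C$ in the basis $\{\xi, \dot C, \varphi\dot C\}$ guaranteed by Proposition~\ref{Proposition 3.2}, compute the three coefficients by pairing with suitable vector fields, and then read off $k_1$ from the general Frenet equation \eqref{2.7}. Write $\nabla_{\dot C}\dot C = \alpha\xi + \beta\dot C + \gamma\varphi\dot C$ as in \eqref{3.6}; the unknowns $\alpha,\beta,\gamma$ are determined by the three scalar products $g(\nabla_{\dot C}\dot C,\xi)$, $g(\nabla_{\dot C}\dot C,\dot C)$, $g(\nabla_{\dot C}\dot C,\varphi\dot C)$, all of which are supplied by Lemma~\ref{Lemma 3.4}, together with the Gram matrix of the basis. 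The key Gram-matrix entries are $g(\xi,\xi)=1$, $g(\xi,\dot C)=\eta(\dot C)=a$, $g(\xi,\varphi\dot C)=\eta(\varphi\dot C)=0$, $g(\dot C,\dot C)=0$ (null condition), $g(\dot C,\varphi\dot C)=b$, and $g(\varphi\dot C,\varphi\dot C)=-g(\dot C,\dot C)+\eta(\dot C)^2=a^2$.

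First I would assemble the linear system. Pairing \eqref{3.6} with $\xi$ gives $\alpha + \beta a = b$ by \eqref{3.2}. Pairing with $\dot C$ gives $\alpha a + \gamma b = 0$ by \eqref{3.3} (and $g(\dot C,\dot C)=0$). Pairing with $\varphi\dot C$ gives $\beta b + \gamma a^2 = \frac{\dot b - 2a^3}{2}$ by the first equality in \eqref{3.4}. This is a $3\times 3$ linear system in $\alpha,\beta,\gamma$; its coefficient matrix is exactly the Gram matrix of $\{\xi,\dot C,\varphi\dot C\}$, whose determinant I expect to be $-(a^4+b^2)$ (nonzero by Proposition~\ref{Proposition 3.2}, which is what guarantees the triad is a basis and makes the division in \eqref{3.7}–\eqref{3.9} legitimate). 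Solving — by Cramer's rule or by direct elimination, e.g. substituting $\alpha = -\gamma b^2/(a^2\cdot\ldots)$ — should yield precisely the expressions \eqref{3.7}, \eqref{3.8}, \eqref{3.9}; I would present this as a short determinant computation rather than grind each entry.

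For the curvature $k_1$, compare \eqref{3.6} with the general Frenet equation \eqref{2.7}, $\nabla_{\dot C}\dot C = h\dot C + k_1 W$. Thus $k_1 W = \alpha\xi + \gamma\varphi\dot C + (\beta - h)\dot C$, and since $g(\dot C,W)=0$, $g(W,W)=1$, I would compute $k_1^2 = g(k_1 W, k_1 W)$. Using $g(\dot C,\dot C)=0$, the cross terms involving $\dot C$ drop except through $g(\dot C,\xi)=a$ and $g(\dot C,\varphi\dot C)=b$; a direct expansion gives $k_1^2 = \alpha^2 g(\xi,\xi) + \gamma^2 g(\varphi\dot C,\varphi\dot C) + 2\alpha\gamma g(\xi,\varphi\dot C) + 2(\beta-h)(\alpha a + \gamma b) = \alpha^2 + a^2\gamma^2$, because $g(\xi,\varphi\dot C)=0$ and $\alpha a + \gamma b = 0$ from the second equation of the system. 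Substituting \eqref{3.7} and \eqref{3.9}, the factor $-2(a^4+b^2)+a\dot b$ appears squared in both terms and $\alpha^2 + a^2\gamma^2 = (b^2 + a^2\cdot a^2)\big(\tfrac{-2(a^4+b^2)+a\dot b}{2(a^4+b^2)}\big)^2 = \big(\tfrac{-2(a^4+b^2)+a\dot b}{2\sqrt{a^4+b^2}}\big)^2$, giving \eqref{3.10} up to the sign $\epsilon=\pm1$ (which reflects the freedom in the choice of $W$ within the screen line bundle). The main obstacle is purely bookkeeping: keeping the algebra of solving the $3\times3$ system clean enough that the stated closed forms emerge without sign errors; the conceptual content — nondegeneracy of the Gram matrix and the vanishing $\alpha a + \gamma b = 0$ that collapses the $k_1^2$ expression — is already in hand from Lemma~\ref{Lemma 3.4} and Proposition~\ref{Proposition 3.2}.
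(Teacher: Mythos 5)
Your proposal is correct and follows essentially the same route as the paper: expand $\nabla_{\dot C}\dot C$ in the basis $\{\xi,\dot C,\varphi\dot C\}$, set up the $3\times 3$ linear system from \eqref{3.2}, \eqref{3.3} and the first equality of \eqref{3.4} with Gram determinant $-(a^4+b^2)\neq 0$ (Proposition~\ref{Proposition 3.2}), solve for $\alpha,\beta,\gamma$, and obtain $k_1^2=\alpha^2+a^2\gamma^2$ from \eqref{2.7} using $a\alpha+b\gamma=0$. No gaps to report.
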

\begin{proof}
Let with respect to the basis $\{\xi, \dot {C}, \varphi \dot {C}\}$ of $T_xM$, $x\in C$,  $\nabla _{\dot {C}}\dot {C}$ have the decomposition \eqref{3.6}. Using \eqref{3.2}, \eqref{3.3} and the first equality in \eqref{3.4} we obtain the following linear system of equations for $\alpha $, $\beta $, $\gamma $
\begin{align*}
\left|\begin{array}{lll}
g(\nabla _{\dot {C}}\dot {C},\xi )=b=\alpha +a\beta \\ \\
g(\nabla _{\dot {C}}\dot {C},\dot {C})=0=a\alpha +b\gamma \\ \\
g(\nabla _{\dot {C}}\varphi \dot {C},\dot {C})=\displaystyle\frac{\dot b-2a^3}{2}=b\beta +
a^2\gamma  .
\end{array}\right.
\end{align*}
The determinant of the above system is $\Delta=-a^4-b^2$ and it is nonzero according to Proposition \ref{Proposition 3.2}. Hence, the system has a unique solution given by \eqref{3.7},  \eqref{3.8} and \eqref{3.9}. From \eqref{2.7} it follows that 
\begin{align}\label{3.11}
k_1^2=g(\nabla _{\dot {C}}\dot {C},\nabla _{\dot {C}}\dot {C})=\alpha ^2+a^2\gamma ^2.
\end{align}
Substituting \eqref{3.7} and \eqref{3.9} in the latter equality we obtain \eqref{3.10}.
\end{proof}
\begin{cor}\label{Corollary 3.6}
A slant null curve $C(t)$ in a 3-dimensional  Sasaki-like almost contact B-metric manifold is geodesic if and only if 
\begin{align}\label{3.12}
b=a^2\tan (2a(t+C_1)), \quad C_1=const.
\end{align}
\end{cor}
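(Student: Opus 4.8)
The plan is to translate the geodesic condition into the vanishing of the curvature $k_1$ and then to integrate the resulting separable first-order ODE for $b$. First I would recall that a null curve $C$ is a geodesic exactly when $\nabla_{\dot{C}}\dot{C}$ is collinear with $\dot{C}$. By the general Frenet equation \eqref{2.7} we have $\nabla_{\dot{C}}\dot{C}=h\dot{C}+k_1W$, and $W$ is linearly independent of $\dot{C}$ by \eqref{3'}; hence $C$ is geodesic if and only if $k_1\equiv 0$ along $C$.

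Next I would use the formula \eqref{3.10} for $k_1$ obtained in Proposition \ref{Proposition 3.5}. Since $a^4+b^2>0$ along $C$ by Proposition \ref{Proposition 3.2}, the equation $k_1=0$ is equivalent to $-2(a^4+b^2)+a\dot b=0$. At this point one must observe that necessarily $a\neq 0$: if $a=0$ the last equation would read $-2b^2=0$, forcing $b\equiv 0$ on $C$, which is impossible by Proposition \ref{Proposition 3.2}. Thus a slant null curve $C$ is geodesic if and only if
\begin{align*}
\dot b=\frac{2(a^4+b^2)}{a},
\end{align*}
which is a separable ODE with $a$ a nonzero constant.

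Finally I would integrate it: dividing by $a^4+b^2$ gives $\dfrac{\dot b}{a^4+b^2}=\dfrac{2}{a}$, that is $\dfrac{{\rm d}}{{\rm d}t}\arctan\dfrac{b}{a^2}=2a$; integrating and writing the integration constant in the form $2aC_1$ (legitimate since $a\neq 0$) yields $\arctan\dfrac{b}{a^2}=2a(t+C_1)$, hence $b=a^2\tan\bigl(2a(t+C_1)\bigr)$, which is \eqref{3.12}. The converse implication is automatic from this chain of equivalences; alternatively, differentiating \eqref{3.12} returns $a\dot b=2(a^4+b^2)$, so $k_1=0$ and $C$ is geodesic. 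The computations involved are short, so there is no genuine obstacle here; the only steps that need care are the reduction of the geodesic property to the condition $k_1=0$ via \eqref{2.7}--\eqref{3'}, and the exclusion of the case $a=0$ by Proposition \ref{Proposition 3.2}, which is precisely what makes the ODE integrable in the stated closed form.
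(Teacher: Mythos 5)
Your proof is correct and follows essentially the same route as the paper: reduce the geodesic condition to $k_1=0$, use \eqref{3.10} to get $a\dot b=2(a^4+b^2)$, and integrate the separable ODE to obtain \eqref{3.12}. Your explicit exclusion of the case $a=0$ via Proposition \ref{Proposition 3.2} is a small extra precaution that the paper leaves implicit, but it does not change the argument.
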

\begin{proof}
It is known \cite{D-B} that a null curve is geodesic if and only if $k_1$ vanishes. From \eqref{3.10}  it follows that $k_1=0$ if and only if $\dot b=\frac{2(a^4+b^2)}{a}$. Integrating  this equality implies \eqref{3.12}.
\end{proof}
Now, let us assume that $C(t)$ is a non-geodesic slant null curve in a 3-dimensional  Sasaki-like almost contact B-metric manifold. Since $k_1\neq 0$, from \eqref{3.11} it follows that the vector field $\nabla _{\dot {C}}\dot {C}=\alpha \xi +\beta \dot {C}+\gamma \varphi \dot {C}$ is spacelike. It is not hard to see that the unique screen vector bundles of $C(t)$ are $S(TC^\bot)={\rm span}\{W\}$ and
$\bar{S}(TC^\bot)={\rm span}\{\overline W\}$, where $W$ and $\overline W$ are the following unit spacelike vector fields orthogonal to $\dot C$
\begin{align}\label{3.13}
W=\frac{1}{k_1}(\alpha \xi +\gamma \varphi \dot {C})=\frac{-\epsilon b}{\sqrt{a^4+b^2}}\xi +\frac{\epsilon a}{\sqrt{a^4+b^2}}\varphi \dot {C},
\end{align}
\begin{align}\label{3.14}
\begin{array}{ll}
\overline W=\displaystyle\frac{\epsilon _1}{k_1}(\alpha \xi +\beta \dot {C}+\gamma \varphi \dot {C})=
\displaystyle\epsilon _1\left(\frac{-\epsilon b}{\sqrt{a^4+b^2}}\xi \right.\\ \\
\qquad \left.+\displaystyle\frac{\epsilon b\dot b}{\sqrt{a^4+b^2}(-2(a^4+b^2)+a\dot b)}\dot C+\frac{\epsilon a}{\sqrt{a^4+b^2}}\varphi \dot {C}\right),
\end{array}
\end{align}
where $\epsilon _1=\pm 1$. From \eqref{3.13} and \eqref{3.14} we have
\begin{align}\label{3.15}
\overline W=\epsilon _1\left(W+\frac{\beta }{k_1}\dot C\right).
\end{align}

We denote by ${\bf F}=\{\dot {C}, N, W\}$ and 
${\bf \overline F}=\{\dot {C}, \overline N, \overline W\}$ the Frenet frames with respect to $\{t, S(TC^\bot)\}$ and $\{t, \bar S(TC^\bot)\}$, respectively. Let $h$, $k_1$, $k_2$ and $\bar h$, $\bar k_1$, $\bar k_2$ be the curvature functions of 
$C$ with respect to ${\bf F}$ and ${\bf \overline F}$, respectively.
Taking into account \eqref{3.13}, \eqref{3.14}, the equality \eqref{3.6} becomes
\begin{align}\label{3.16}
\nabla _{\dot {C}}\dot {C}=\beta \dot {C}+k_1W,
\end{align}
\begin{align}\label{3.17}
\nabla _{\dot {C}}\dot {C}=\bar k_1\overline W,
\end{align}
respectively, where 
\begin{align}\label{3.18}
\bar k_1=\epsilon _1k_1 .
\end{align}
By using  \eqref{3.15} and the relations between the elements of ${\bf F}$ and 
${\bf \overline F}$ from \cite[p. 65]{D-B} we obtain
\begin{align}\label{3.19}
\overline N=-\frac{\beta^2}{2k_1^2}\dot C+N-\frac{\beta}{k_1}W ,
\end{align}
\begin{align}\label{3.20}
\bar k_2=\epsilon _1
\left(k_2-\frac{{\rm d}}{{\rm d}t}{\left(\frac{\beta}{k_1}\right)}-\frac{\beta ^2}{2k_1}\right).
\end{align}
\begin{prop}\label{Proposition 3.7}
Let $M$ be a 3-dimensional Sasaki-like almost contact B-metric manifold and $C(t)$ be a non-geodesic slant null curve in $M$.
\par
\item (i) If the screen vector bundle of $C(t)$ is $S(TC^\bot)={\rm span}\{W\}$, then
\begin{align}\label{3.21}
N=\frac{a^3}{a^4+b^2}\xi -\frac{a^2}{2(a^4+b^2)}\dot C+\frac{b}{a^4+b^2}\varphi \dot C ,
\end{align}
\begin{align}\label{3.22}
h=\beta =\frac{b\dot b}{2(a^4+b^2)} ,
\end{align}
\begin{align}\label{3.23}
k_2=\epsilon \frac{-2a^2(a^4+b^2)+3a^3\dot b}{4(a^4+b^2)^{3/2}} , \quad \epsilon =\pm 1.
\end{align}
\par
\item (ii) If the screen vector bundle of $C(t)$ is $\bar S(TC^\bot)={\rm span}\{\overline W\}$, then the original parameter $t$ of $C(t)$ is distinguished and
\begin{align}\label{3.24}
\begin{array}{ll}
\overline N=\displaystyle\left(\frac{\dot b-2a^3}{-2(a^4+b^2)+a\dot b}\right)\xi -
\left(\frac{4a^2(a^4+b^2-a\dot b)+\dot b^2}{2(-2(a^4+b^2)+a\dot b)^2}\right)\dot C\\ \\
\quad\, -\displaystyle\left(\frac{2b}{-2(a^4+b^2)+a\dot b}\right)\varphi \dot C ,
\end{array}
\end{align}
\begin{align}\label{3.25}
\bar k_2=\epsilon_1\epsilon\frac{(a^4+b^2)(8b\ddot {b}+20a^3\dot b-8a^6-8a^2b^2)-a\dot b^3-2(3a^4+7b^2)\dot b^2}{4\sqrt{a^4+b^2}(-2(a^4+b^2)+a\dot b)^2}.
\end{align}
\end{prop}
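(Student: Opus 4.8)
The plan is to treat (i) and (ii) separately, using the basis $\{\xi,\dot C,\varphi\dot C\}$ of $T_xM$ together with Lemma \ref{Lemma 3.4}, \eqref{2.2} and \eqref{2.3} throughout. For \emph{Part (i)} I would first pin down $N$ from its defining relations \eqref{3'}: writing $N=x\xi+y\dot C+z\varphi\dot C$ and recording the Gram matrix of the basis (namely $g(\xi,\xi)=1$, $g(\xi,\dot C)=a$, $g(\xi,\varphi\dot C)=0$, $g(\dot C,\dot C)=0$, $g(\dot C,\varphi\dot C)=b$, $g(\varphi\dot C,\varphi\dot C)=a^2$), the conditions $g(\dot C,N)=1$ and $g(N,W)=0$ with $W$ as in \eqref{3.13} turn into the linear system $ax+bz=1$, $bx-a^3z=0$, whose determinant $a^4+b^2\neq0$ (Proposition \ref{Proposition 3.2}) gives $x=\tfrac{a^3}{a^4+b^2}$, $z=\tfrac{b}{a^4+b^2}$; then $g(N,N)=0$ reduces, after using $ax+bz=1$, to $\tfrac{a^2}{a^4+b^2}+2y=0$, so $y=-\tfrac{a^2}{2(a^4+b^2)}$, which is \eqref{3.21}. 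Uniqueness of $N$ for the given $W$ is \cite[Theorem 1.1, p.~53]{D-B}. The value of $h$ is then immediate: \eqref{3.16} (obtained by substituting \eqref{3.13} into \eqref{3.6}) already has the shape of the general Frenet equation \eqref{2.7}, so $h=\beta$, and \eqref{3.8} gives \eqref{3.22}.

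For $k_2$ in Part (i) I would use \eqref{2.9} in the form $k_2=-g(\nabla_{\dot C}W,N)$ (equivalently $k_2=g(\nabla_{\dot C}N,W)$ from \eqref{2.8}). Differentiating along $C$ needs only $\nabla_{\dot C}\xi=-\varphi\dot C$ from \eqref{2.3} and $\nabla_{\dot C}(\varphi\dot C)=(\nabla_{\dot C}\varphi)\dot C+\varphi(\nabla_{\dot C}\dot C)$, where the Sasaki-like identity with $X=Y=\dot C$ yields $(\nabla_{\dot C}\varphi)\dot C=2a^2\xi-a\dot C$ and \eqref{3.6}--\eqref{3.9} give $\varphi(\nabla_{\dot C}\dot C)$; pairing the result with $N$ from \eqref{3.21} and $W$ from \eqref{3.13} and simplifying over $4(a^4+b^2)^{3/2}$ produces \eqref{3.23} (a useful check is that the numerator collapses to $(-2a^2(a^4+b^2)+3a^3\dot b)(a^4+b^2)$).

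\emph{Part (ii).} That the original parameter $t$ is distinguished for $\bar S(TC^\bot)$ follows at once from \eqref{3.17}: since $\nabla_{\dot C}\dot C=\bar k_1\overline W$ has no $\dot C$-component relative to ${\bf\overline F}$, the coefficient $\bar h$ in the corresponding equation of type \eqref{2.7} vanishes, which is exactly the defining property of a distinguished parameter. To get $\overline N$ I would substitute \eqref{3.21}, \eqref{3.13}, \eqref{3.8}, \eqref{3.10} into \eqref{3.19}, using $\tfrac{\beta}{k_1}=\tfrac{\epsilon b\dot b}{\sqrt{a^4+b^2}\,(-2(a^4+b^2)+a\dot b)}$ and $\tfrac{\beta^2}{2k_1^2}=\tfrac{b^2\dot b^2}{2(a^4+b^2)(-2(a^4+b^2)+a\dot b)^2}$, and then collect the $\xi$-, $\dot C$- and $\varphi\dot C$-coefficients one at a time; the $\xi$-numerator factors as $(a^4+b^2)(\dot b-2a^3)$, the $\varphi\dot C$-numerator as $-2b(a^4+b^2)$, and the $\dot C$-coefficient reduces to the claimed form via the identity $b^2\dot b^2+a^2(-2(a^4+b^2)+a\dot b)^2=(a^4+b^2)\bigl(\dot b^2+4a^2(a^4+b^2-a\dot b)\bigr)$, giving \eqref{3.24}.

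Finally, $\bar k_2$ comes from \eqref{3.20}: insert $k_2$ from \eqref{3.23}, differentiate $\tfrac{\beta}{k_1}$ (the only place where $\ddot b$ enters), form $\tfrac{\beta^2}{2k_1}=\tfrac{\epsilon b^2\dot b^2}{4(a^4+b^2)^{3/2}(-2(a^4+b^2)+a\dot b)}$, and add the three terms over the common denominator $4\sqrt{a^4+b^2}\,(-2(a^4+b^2)+a\dot b)^2$. I expect this last simplification to be the main obstacle: the numerators are polynomials of fairly high degree in $a,b,\dot b,\ddot b$ and a good deal of cancellation must be tracked carefully to arrive at the compact numerator $(a^4+b^2)(8b\ddot b+20a^3\dot b-8a^6-8a^2b^2)-a\dot b^3-2(3a^4+7b^2)\dot b^2$ of \eqref{3.25}; everything else is mechanical once \eqref{2.2}, \eqref{2.3} and Lemma \ref{Lemma 3.4} are in hand.
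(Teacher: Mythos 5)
Your proposal follows essentially the same route as the paper: solve the linear system from $g(N,\dot C)=1$, $g(N,W)=0$, $g(N,N)=0$ in the basis $\{\xi,\dot C,\varphi\dot C\}$ to get \eqref{3.21}, read off $h=\beta$ from \eqref{3.16}, compute $k_2=g(\nabla_{\dot C}N,W)$ directly via \eqref{2.2}, \eqref{2.3} and Lemma \ref{Lemma 3.4}, and obtain \eqref{3.24}, \eqref{3.25} by substituting into \eqref{3.19} and \eqref{3.20}, exactly as in the paper (your identity for the $\dot C$-coefficient and the stated values of $\beta/k_1$, $\beta^2/2k_1^2$, $\beta^2/2k_1$ all check out). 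The only nit is the parenthetical in your $k_2$ check: if the numerator carries the extra factor $(a^4+b^2)$, the common denominator is $4(a^4+b^2)^{5/2}$ rather than $4(a^4+b^2)^{3/2}$ — a bookkeeping slip, not a gap.
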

\begin{proof}
(i) Let $N=\lambda \xi +\mu \dot {C}+\nu \varphi \dot {C}$ for some functions $\lambda$, $\mu$ and 
$\nu$ along $C$. By using \eqref{3'} and \eqref{3.13} we obtain the following system for $\lambda$, 
$\mu$, $\nu$
\begin{align}\label{3.26}
\left|\begin{array}{lll}
g(N,N)=0=\lambda^2+a^2\nu^2+2a\lambda \mu+2b\mu \nu \\
g(N,\dot C)=1=a\lambda +b\nu \\
g(N,W)=0=\displaystyle\frac{-\epsilon b}{\sqrt{a^4+b^2}}\lambda +\frac{\epsilon a^3}{\sqrt{a^4+b^2}}\nu .
\end{array}\right.
\end{align}
The last two equations in \eqref{3.26} form the linear system 
\begin{align}\label{3.27}
\left|\begin{array}{ll}
a\lambda +b\nu =1\\
-b\lambda +a^3\nu =0 ,
\end{array}\right.
\end{align}
whose determinant $\Delta _1=a^4+b^2$ is nonzero. Thus \eqref{3.27} has a unique solution that is
\begin{align}\label{3.28}
\lambda =\frac{a^3}{a^4+b^2} , \qquad \nu =\frac{b}{a^4+b^2} .
\end{align}
Substituting \eqref{3.28} in the first equation of \eqref{3.26} we find 
\begin{align}\label{3.29}
\mu =-\frac{a^2}{2(a^4+b^2)} .
\end{align}
Hence, for the vector field $N$ from the Frenet frame ${\bf F}$ the equality \eqref{3.21} is valid.
The equality \eqref{3.22} for the curvature function $h$ with respect to ${\bf F}$ follows from \eqref{3.8} and \eqref{3.16}.\\
From the Frenet equation \eqref{2.8} with respect to ${\bf F}$  we derive 
\[
k_2=g(\nabla _{\dot {C}}N,W).
\]
Taking into account \eqref{3.13}, the latter equality becomes
\begin{align*}
\begin{array}{ll}
k_2=\displaystyle\frac{-\epsilon b}{\sqrt{a^4+b^2}}\left(\dot \lambda +\lambda g(\nabla _{\dot {C}}\xi ,\xi )+\dot \mu a+\mu g(\nabla _{\dot {C}}\dot {C},\xi )+
\nu g(\nabla _{\dot {C}}\varphi \dot {C},\xi )\right)\\
+\displaystyle\frac{\epsilon a}{\sqrt{a^4+b^2}}\left(\lambda g(\nabla _{\dot {C}}\xi ,\varphi \dot {C})+\dot \mu b+\mu g(\nabla _{\dot {C}}\dot {C},\varphi \dot {C})+\dot \nu a^2+
\nu g(\nabla _{\dot {C}}\varphi \dot {C},\varphi \dot {C})\right),
\end{array}
\end{align*}
where $\lambda$, $\nu$ and $\mu$  are given by \eqref{3.28} and \eqref{3.29}, respectively. After direct
computations using Lemma \ref{Lemma 3.4} we obtain \eqref{3.23}.\\
(ii) From \eqref{3.17} it follows that $\overline h=0$  which means that the original parameter $t$ of $C$ is distinguished.
With the help of \eqref{3.8}, \eqref{3.10}, \eqref{3.13}, \eqref{3.19}, \eqref{3.21} we obtain \eqref{3.24}. Using \eqref{3.8}, \eqref{3.10}, \eqref{3.20}, \eqref{3.23} we get \eqref{3.25}.
\end{proof}
According to the assertion (ii) in Proposition \ref{Proposition 3.7} a non-geodesic slant null curve $C(t)$ in a 3-dimensional Sasaki-like almost contact B-metric manifold is a framed null curve 
with respect to a Frenet frame ${\bf \overline F}=\{\dot {C}, \overline N, \overline W\}$ and with a distinguished parameter--the original parameter $t$. Such curve we will denote by 
$(C(t),{\bf \overline F})$.
At the end of this section we study null Gartan helices and generalized helices among the framed null curves $(C(t),{\bf \overline F})$.
Let us recall 
\begin{defn}\label{Definition 3.8}\cite{D-J, FGL}
A null Cartan curve with a constant Cartan curvature $k_2$ is called a null Cartan helix.
\end{defn}
\begin{defn}\label{Definition 3.9}\cite{D-J, FGL}
A null Cartan curve $C$ in a 3-dimensional Lorentzian manifold $(M,g)$ is a generalized helix if there exists a nonzero constant vector $v$ such that $g(\dot C,v)$ is constant.
\end{defn}
The following result is known from \cite{D-J, FGL}
\begin{prop}\label{Proposition 3.10}\cite{D-J, FGL}
Let  $C$ be a null Cartan curve in a 3-dimensional Lorentzian manifold. Then $C$ is a generalized helix  if and only if $C$ is a null Cartan helix.
\end{prop}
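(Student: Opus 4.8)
The plan is to work entirely in the Cartan Frenet frame ${\bf F}=\{\dot C,N,W\}$ of $C$, whose Frenet equations \eqref{2.10} are available since, by Definition~\ref{Definition 3.8} and the discussion preceding \eqref{2.10}, a null Cartan curve is by convention non-geodesic with $k_1=1$. First I would record that the Gram matrix of $\{\dot C,N,W\}$ with respect to $g$, read off from \eqref{3'}, is nondegenerate, so these three vector fields form a basis of $T_xM$ at every point of $C$; hence any vector field $v$ along $C$ has a unique decomposition $v=a\dot C+bN+eW$ with smooth functions $a,b,e$, and \eqref{3'} gives $g(\dot C,v)=b$. Differentiating this decomposition along $C$ and substituting \eqref{2.10},
\begin{align}\label{plan-nabla-v}
\nabla _{\dot C}v=(\dot a-ek_2)\dot C+(\dot b-e)N+(a+bk_2+\dot e)W ,
\end{align}
so, by the linear independence of the frame, $v$ is a constant (parallel along $C$) vector field if and only if $\dot a=ek_2$, $\dot b=e$, $\dot e=-a-bk_2$. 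Together with $g(\dot C,v)=b$, this reduces the proposition to the claim that this ODE system has a nonzero solution with $b$ constant if and only if $k_2$ is constant.

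For the direction ``generalized helix $\Rightarrow$ null Cartan helix'', I would take such a $v$ with $b\equiv b_0\in\R$; then $e=\dot b=0$, hence $\dot a=ek_2=0$, so $a\equiv a_0\in\R$, and the third equation forces $a_0+b_0k_2=0$. If $b_0=0$ this yields $a_0=0$ and $e=0$, i.e. $v=0$, contradicting the hypothesis; therefore $b_0\neq0$ and $k_2=-a_0/b_0$ is constant, so $C$ is a null Cartan helix (and $g(\dot C,v)=b_0$ is automatically a \emph{nonzero} constant). For the converse, if $k_2$ is constant I would simply exhibit $v:=N-k_2\dot C$: by \eqref{2.10} and $\dot k_2=0$ one gets $\nabla _{\dot C}v=k_2W-k_2W=0$, so $v$ is a constant vector field along $C$; it is nonzero because $N$ is a member of the basis $\{\dot C,N,W\}$; and $g(\dot C,v)=g(\dot C,N)-k_2\,g(\dot C,\dot C)=1$ by \eqref{3'}, which is constant. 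Hence $C$ is a generalized helix.

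I do not expect a real obstacle here: the argument is a short piece of linear algebra together with a one-line ODE computation in the Cartan frame. The only points genuinely needing care are recording that $\{\dot C,N,W\}$ is pointwise a basis — so that \eqref{plan-nabla-v} faithfully encodes the constancy of $v$ — and disposing of the degenerate possibility $b_0=0$ in the first implication, since eliminating that case is exactly what guarantees $g(\dot C,v)\neq0$ and pins down the value $k_2=-a_0/b_0$.
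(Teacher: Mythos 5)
Your argument is correct. Note, however, that the paper does not prove Proposition \ref{Proposition 3.10} at all: it is quoted as a known result from \cite{D-J, FGL}, so there is no internal proof to compare with; what you have written is essentially the standard argument from those references, transported from the Minkowski setting to a general Lorentzian 3-manifold. The one interpretive point worth flagging is that ``nonzero constant vector $v$'' in Definition \ref{Definition 3.9} must be read, in a general Lorentzian manifold, as a vector field parallel along $C$ (i.e. $\nabla_{\dot C}v=0$); that is exactly how you use it, and it is the only sensible reading, but you should state it explicitly since in \cite{FGL} the ambient is flat and ``constant'' is literal. Granting that, both directions are sound: the decomposition $v=a\dot C+bN+eW$ with Gram matrix of determinant $-1$ is legitimate, the parallelism system $\dot a=ek_2$, $\dot b=e$, $\dot e=-a-bk_2$ follows correctly from \eqref{2.10}, the elimination of the degenerate case $b_0=0$ (which would force $v=0$) is exactly the right care to take, and in the converse the explicit axis $v=N-k_2\dot C$ with $g(\dot C,v)=1$ settles the claim in one line.
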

\begin{thm}\label{Theorem 3.11}
Let $C(t): I\longrightarrow M$ be a non-geodesic slant null curve in a 3-dimensional Sasaki-like almost contact B-metric manifold $M$ and the function $b(t)\in C^1$ in $I$. Then the framed null curve 
$(C(t),{\bf \overline F})$ is of constant curvatures $\bar k_1$ and $\bar k_2$ if and only if $b$ is a constant in $I$. Moreover, for an arbitrary constant $b$ the curvatures $\bar k_1$ and $\bar k_2$ are the following constants
\begin{align}\label{3.35}
\bar k_1=\epsilon _1\epsilon (-\sqrt{a^4+b^2}),
\end{align}
\begin{align}\label{3.36}
\bar k_2=\epsilon _1\epsilon \left(-\frac{a^2}{2\sqrt{a^4+b^2}}\right) .
\end{align}
\end{thm}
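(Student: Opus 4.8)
The plan is to work directly from the explicit formulas for $\bar k_1$ and $\bar k_2$ established in Proposition~\ref{Proposition 3.5} and Proposition~\ref{Proposition 3.7}. Recall from \eqref{3.18} and \eqref{3.10} that
\[
\bar k_1=\epsilon_1\epsilon\,\frac{-2(a^4+b^2)+a\dot b}{2\sqrt{a^4+b^2}},
\]
and from \eqref{3.25} we have the corresponding (longer) expression for $\bar k_2$ in terms of $a$, $b$, $\dot b$, $\ddot b$. First I would verify the easy direction: if $b$ is constant in $I$, then $\dot b=\ddot b=0$, and substituting into \eqref{3.18}/\eqref{3.10} gives $\bar k_1=\epsilon_1\epsilon\bigl(-2(a^4+b^2)\bigr)/\bigl(2\sqrt{a^4+b^2}\bigr)=\epsilon_1\epsilon(-\sqrt{a^4+b^2})$, which is \eqref{3.35}; substituting $\dot b=\ddot b=0$ into \eqref{3.25} collapses the numerator to $-8a^2(a^4+b^2)^2$ (only the $-8a^6-8a^2b^2$ term survives inside the bracket, multiplied by $a^4+b^2$), and dividing by $4\sqrt{a^4+b^2}\cdot 4(a^4+b^2)^2$ yields $\epsilon_1\epsilon\bigl(-a^2/(2\sqrt{a^4+b^2})\bigr)$, which is \eqref{3.36}. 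These are manifestly constants since $a$ is a real constant by Definition~\ref{Definition 3.1}.

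For the converse, assume both $\bar k_1$ and $\bar k_2$ are constant on $I$. The key step is to exploit the constancy of $\bar k_1$ first. Write $u=a^4+b^2>0$ (positive by Proposition~\ref{Proposition 3.2}); then $\bar k_1$ constant means $\bigl(-2u+a\dot b\bigr)/\sqrt{u}$ is constant, say equal to a constant $c$. Differentiating and using $\dot u=2b\dot b$ gives a first-order relation; alternatively, and more cleanly, I would combine the constancy of $\bar k_1$ with that of $\bar k_2$. Observe that \eqref{3.25} can be rewritten, using $-2(a^4+b^2)+a\dot b$ as a recurring block, so that $\bar k_2$ is (roughly) a rational expression whose denominator is $4\sqrt{a^4+b^2}\bigl(-2(a^4+b^2)+a\dot b\bigr)^2$; note the denominator here is, up to the factor $\sqrt{a^4+b^2}$ and constants, $\bar k_1^2$ times $(a^4+b^2)$, hence already controlled. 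So the content of ``$\bar k_2$ constant'' is that the numerator of \eqref{3.25} is a constant multiple of $\sqrt{a^4+b^2}\bigl(-2(a^4+b^2)+a\dot b\bigr)^2$. Using the already-derived fact that $(-2(a^4+b^2)+a\dot b)=c\sqrt{a^4+b^2}$ for a constant $c$, I would substitute this into the numerator of \eqref{3.25} to eliminate $\dot b$ in favour of $b$ and the constant $c$, reducing the $\bar k_2$-equation to an ODE involving only $b$, $\dot b$, $\ddot b$ and constants; differentiating $(-2(a^4+b^2)+a\dot b)=c\sqrt{a^4+b^2}$ also expresses $\ddot b$ through $b$ and $\dot b$. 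Carrying out this elimination should force $\dot b\equiv 0$, i.e. $b$ constant.

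The main obstacle I anticipate is purely computational: the numerator of \eqref{3.25} is a degree-three polynomial in $\dot b$ with $b$-dependent coefficients plus an $\ddot b$ term, and showing that constancy of $\bar k_1$ and $\bar k_2$ together is incompatible with $\dot b\not\equiv 0$ requires a careful, somewhat lengthy algebraic reduction — essentially showing that after imposing $\bar k_1=\const$ the only way to also have $\bar k_2=\const$ is the degenerate solution. A cleaner route, which I would try first, is to argue by contradiction: suppose $\dot b(t_0)\neq 0$ at some $t_0$; on a neighbourhood of $t_0$ the relation $a\dot b = 2(a^4+b^2)+c\sqrt{a^4+b^2}$ is a nondegenerate first-order ODE for $b$ whose solutions are non-constant and real-analytic in that neighbourhood, and then I would check that for such $b$ the expression \eqref{3.25} is genuinely non-constant (for instance by evaluating its derivative at $t_0$ and showing it cannot vanish identically unless $\dot b\equiv 0$), contradicting the hypothesis. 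Either way the algebra is the bottleneck; everything else is substitution into formulas already proved in Propositions~\ref{Proposition 3.5} and \ref{Proposition 3.7}. Finally, once $b$ is shown to be constant, formulas \eqref{3.35} and \eqref{3.36} follow from the easy-direction computation above, completing the proof.
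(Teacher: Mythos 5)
Your easy direction is fine and matches the paper: with $\dot b=\ddot b=0$, \eqref{3.10}, \eqref{3.18} and \eqref{3.25} collapse to \eqref{3.35} and \eqref{3.36}, and your simplification of the numerator of \eqref{3.25} is correct. Your plan for the converse is also the same route the paper takes: constancy of $\bar k_1$ gives $-2(a^4+b^2)+a\dot b=2C_1\sqrt{a^4+b^2}$ with $C_1$ a nonzero constant, hence expressions for $\dot b$ (eq.\ \eqref{3.31}) and, after differentiating, for $\ddot b$ (eq.\ \eqref{3.32}), which are then substituted into the condition $\bar k_2=\mathrm{const}$.

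The genuine gap is that you stop exactly at the step that constitutes the theorem. You assert that the elimination ``should force $\dot b\equiv 0$'' and that in the contradiction variant you ``would check'' non-constancy of \eqref{3.25}, but neither computation is carried out, and the assertion is not even literally what the elimination yields: substituting \eqref{3.31}--\eqref{3.32} does not produce $\dot b=0$ but the purely algebraic constraint \eqref{3.33}, namely $5b^2+6C_1\sqrt{a^4+b^2}+2\epsilon_1\epsilon C_1C_2+4a^4+C_1^2=0$, which involves the unknown constant $C_2$ and so by itself does not immediately say anything about $b$. One must then differentiate to get $\bigl(b^2\bigr)^{.}\bigl(5\sqrt{a^4+b^2}+3C_1\bigr)=0$ (eq.\ \eqref{3.34}) and argue separately: for $C_1>0$ the second factor never vanishes, while for $C_1<0$ it may vanish, and the paper needs the continuity/open-interval argument (cases 2.1--2.3) to rule out non-constant $b$ there; alternatively one can note that $b^2$ takes values in the zero set of a nonconstant function of one variable, hence is locally constant. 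Without carrying out this reduction and handling the degenerate factor, the converse is not proved; everything you have written before that point is a correct setup, but the decisive algebra and the case analysis are missing.
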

\begin{proof}
''$\Longrightarrow $'' Let us assume that both $\bar k_1$ and $\bar k_2$ are constants. Taking into account \eqref{3.10} and \eqref{3.18} we have $\bar k_1=\epsilon_1k_1=\epsilon_1\epsilon C_1$, where 
\begin{align}\label{3.30}
C_1=\displaystyle\frac{-2(a^4+b^2)+a\dot b}{2\sqrt{a^4+b^2}}
\end{align}
is a nonzero constant. From the above equality we obtain
\begin{align}\label{3.31}
\dot b=\frac{2C_1\sqrt{a^4+b^2}+2(a^4+b^2)}{a} .
\end{align}
By using \eqref{3.31} we find
\begin{align}\label{3.32}
\ddot b=\frac{2b\dot b(C_1+2\sqrt{a^4+b^2})}{a\sqrt{a^4+b^2}} .
\end{align}
Now, substituting $\bar k_2=C_2$=const, $\bar k_1=\epsilon _1\epsilon C_1$ and \eqref{3.23} in \eqref{3.20}, we get 
\[
\begin{array}{ll}
(3b^2-4a^4)\dot b^2-4b(a^4+b^2)\ddot b+2C_1\sqrt{a^4+b^2}(-2a^2(a^4+b^2)+3a^3\dot b)\\
-8\epsilon _1\epsilon C_1C_2(a^4+b^2)^2=0 .
\end{array}
\]
With the help of \eqref{3.31} and \eqref{3.32} the latter equality becomes
\begin{align}\label{3.33}
5b^2+6C_1\sqrt{a^4+b^2}+2\epsilon _1\epsilon C_1C_2+4a^4+C_1^2=0 .
\end{align}
After differentiation of  \eqref{3.33} we obtain
\begin{align}\label{3.34}
\left(b^2\right)^{.}\left(5\sqrt{a^4+b^2}+3C_1\right)=0,
\end{align}
where $\left(b^2\right)^{.}$ stands for $\frac{{\rm d}}{{\rm d}t}b^2$.
Further, we consider separately the cases $C_1>0$ and $C_1<0$ in \eqref{3.34}.
\\ 1) If $C_1>0$, then $5\sqrt{a^4+b^2}+3C_1$ is nonzero for any $t\in I$. Thus from \eqref{3.34}
we infer that $\left(b^2\right)^{.}=0$ for any $t\in I$, which implies that $b$ is a constant.
\\ 2) If $C_1<0$, then there exist the following possibilities for the vanishing of the left hand side of \eqref{3.34}:
\\ 2.1) $\left(b^2\right)^{.}=0$ for any $t\in I$ and hence $b$ is a constant.
\\ 2.2) There exists  $t_0\in I$ such that $\left(b^2\right)^{.}(t_0)\neq 0$. Since $\left(b^2\right)^{.}$ is continuous in $I$, it follows that there exists an open interval $I_0\subset I$, $t_0\in I_0$, such that  ${\left(b^2\right)^{.}(t)\neq 0}$ for any $t\in I_0$. Then for any $t\in I_0$ we have $\left(5\sqrt{a^4+b^2}+3C_1\right)=0$. From the latter equality it follows that $b^2$ is a constant in $I_0$ and hence $\left(b^2\right)^{.}=0$ in
$I_0$ which is a contradiction. 
\\ 2.3) Analogously as in 2.2) we see that the assumption $\left(b^2\right)^{.}\neq 0$ for any $t\in I$ implies a contradiction.
\\ ''$\Longleftarrow $'' Conversely, let $b$ be a constant in $I$. Substituting $\dot b=\ddot b=0$ in
\eqref{3.10} and \eqref{3.25} we obtain that $\bar k_1$ and $\bar k_2$ are constants given by \eqref{3.35} and \eqref{3.36}, respectively.
\end{proof}
\begin{cor}\label{Corollary 3.12}
A non-geodesic slant null curve $(C(t),{\bf \overline F})$ in a 3-dimensional Sasaki-like almost contact B-metric manifold is a generalized helix if and only if the following conditions are valid
\begin{align}\label{3.37}
\begin{array}{ll}
b^2=1-a^4, \quad \text{where}\quad a\in [-1,0)\cup (0,1];\\
\bar k_2=\frac{a^2}{2} .
\end{array}
\end{align}
\end{cor}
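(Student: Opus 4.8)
The plan is to derive \eqref{3.37} by combining the helix criterion of \propref{Proposition 3.10} with the curvature formulas of \thmref{Theorem 3.11}. Recall from assertion~(ii) of \propref{Proposition 3.7} that $(C(t),{\bf\overline F})$ is a framed null curve whose distinguished parameter is the original one, with $\nabla_{\dot C}\dot C=\bar k_1\overline W$ by \eqref{3.17}. To regard such a curve as a generalized helix (\dfnref{Definition 3.9}) it must first be a null Cartan curve; keeping the parameter $t$, this is the Cartan normalization $g(\nabla_{\dot C}\dot C,\nabla_{\dot C}\dot C)=\bar k_1^{\,2}=1$, and after fixing the orientation of $\overline W$ accordingly we may take $\bar k_1=1$. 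By \propref{Proposition 3.10} the curve is then a generalized helix precisely when it is a null Cartan helix, i.e. when $\bar k_2$ is constant too. So I would first record the equivalence: $(C(t),{\bf\overline F})$ is a generalized helix $\iff$ $\bar k_1=1$ and $\bar k_2$ is constant; in particular both curvatures are then constant.

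For the forward implication I would invoke \thmref{Theorem 3.11}: both curvatures being constant forces $b$ to be constant on $I$, and \eqref{3.35}, \eqref{3.36} give $\bar k_1=-\epsilon_1\epsilon\sqrt{a^4+b^2}$ and $\bar k_2=-\epsilon_1\epsilon\,\frac{a^2}{2\sqrt{a^4+b^2}}$. Since $\sqrt{a^4+b^2}>0$, the normalization $\bar k_1=1$ forces $\epsilon_1\epsilon=-1$ and $\sqrt{a^4+b^2}=1$, that is, $b^2=1-a^4$; substituting into the $\bar k_2$-formula yields $\bar k_2=\frac{a^2}{2}$. Reality of $b$ requires $1-a^4\ge 0$, so $|a|\le 1$, while $a\ne 0$ since the curve is slant and not Legendre (the case $a=0$ belonging to the next section); hence $a\in[-1,0)\cup(0,1]$ and \eqref{3.37} holds. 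I would also note in passing that for these $a$ one has $\bar k_1=1\ne 0$, so the curve is automatically non-geodesic, consistently with the hypothesis, and that $b=0$ is permitted, namely when $a=\pm 1$.

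For the converse I would start from \eqref{3.37}: if $b^2=1-a^4$ with $a\in[-1,0)\cup(0,1]$ and $\bar k_2=\frac{a^2}{2}$, then $a^4+b^2=1$ is constant, hence $\dot b=0$ and $b$ is constant on $I$; by the ``$\Longleftarrow$'' part of \thmref{Theorem 3.11} the curvatures are the constants $\bar k_1=-\epsilon_1\epsilon$ and $\bar k_2=-\epsilon_1\epsilon\,\frac{a^2}{2}$. As $a\ne 0$, the prescribed positive value $\bar k_2=\frac{a^2}{2}$ forces $\epsilon_1\epsilon=-1$, hence $\bar k_1=1$; therefore $(C(t),{\bf\overline F})$ is a null Cartan curve with constant torsion $\bar k_2=\frac{a^2}{2}$, i.e. a null Cartan helix, and \propref{Proposition 3.10} then gives that it is a generalized helix.

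I expect the main obstacle to be conceptual rather than computational. One has to be careful that declaring $(C(t),{\bf\overline F})$ a ``generalized helix'' already commits one to the Cartan normalization $\bar k_1=1$ (equivalently $g(\nabla_{\dot C}\dot C,\nabla_{\dot C}\dot C)=1$), so that \propref{Proposition 3.10} is genuinely applicable, and then to track the two free signs $\epsilon$ and $\epsilon_1$ through \eqref{3.35}--\eqref{3.36}: it is precisely the positivity of the prescribed value $\bar k_2=\frac{a^2}{2}$, together with $a\ne 0$, that pins down $\epsilon_1\epsilon=-1$ and hence $\bar k_1=+1$ rather than $-1$. Once this bookkeeping is settled, everything else reduces to a direct substitution into the curvature formulas of \thmref{Theorem 3.11}, with no further computation needed.
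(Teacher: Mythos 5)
Your proposal is correct and follows essentially the same route as the paper: reduce via Proposition \ref{Proposition 3.10} to the statement that $(C(t),{\bf\overline F})$ is a null Cartan helix (so $\bar k_1=1$, $\bar k_2$ constant), then apply Theorem \ref{Theorem 3.11} in both directions and use \eqref{3.35}--\eqref{3.36} to pin down $\epsilon_1\epsilon=-1$, $\sqrt{a^4+b^2}=1$ and $\bar k_2=\frac{a^2}{2}$. Your extra remarks on the sign bookkeeping and on $a\neq 0$ are consistent with (and slightly more explicit than) the paper's argument.
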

\begin{proof}
Taking into account Proposition \ref{Proposition 3.10} we see that
it is enough to show that $(C(t),{\bf \overline F})$ is a null Cartan helix if and only if \eqref{3.37} hold. 
\\
First, let $(C(t),{\bf \overline F})$ be a null Cartan helix. Hence $\bar k_1=1$ and $\bar k_2$ is a constant. Now, applying Theorem \ref{Theorem 3.11} we obtain that $b$ is a constant. Then  $\bar k_1$ and $\bar k_2$ are given by \eqref{3.35} and \eqref{3.36}, respectively. Replacing $\bar k_1$ in \eqref{3.35} with 1 we get $1=\epsilon _1\epsilon (-\sqrt{a^4+b^2})$. By using the latter equality and \eqref{3.36} we obtain the conditions \eqref{3.37}.
\\
Conversely, let us assume that \eqref{3.37} are fulfilled. Since $b$ is a constant, from Theorem \ref{Theorem 3.11} it follows that for $\bar k_1$ and $\bar k_2$ the equalities \eqref{3.35} and \eqref{3.36} hold. By using \eqref{3.36} and $\bar k_2=\frac{a^2}{2}$ we get $\epsilon _1\epsilon =-1$. Substituting the latter and $b^2=1-a^4$ in \eqref{3.35} we find $\bar k_1=1$ which completes the proof.
\end{proof}
As a generalization of the magnetic curves in \cite{Bejan} was introduced the notion of $F$-geodesics in a manifold $M$ endowed with a (1,1)-tensor field $F$ and with a linear connection $\nabla $.
\begin{defn}\label{Definition 3.13}\cite{Bejan}
A smooth curve $\gamma : I\longrightarrow M$ in a manifold $(M,F,\nabla )$ is an $F$-geodesic if 
$\gamma (t)$ satisfies $\nabla _{\dot {\gamma }(t)}\dot {\gamma }(t)=F\dot {\gamma }(t)$.
\end{defn}
Note that an $F$-geodesic is not invariant under a reparameterization.
\begin{prop}\label{Proposition 3.14}
Let $C(t)$ be a slant null curve in a 3-dimensional Sasaki-like almost contact B-metric manifold $(M,\varphi,\xi,\eta,g)$. Then 
\par
\item (i) $C(t)$ is a $\varphi $-geodesic if and only if $b=0$ and $a=-1$. 
\par
\item (ii) If $C(t)$ is a $\varphi $-geodesic, then it is a null Cartan curve with a Cartan Frenet frame 
${\bf \overline F}=\{\dot C, \, \overline N=-\xi -\frac{1}{2}\dot C, \, \overline W=\varphi \dot C\}$ and 
$\bar k_2=\frac{1}{2}$. 
\end{prop}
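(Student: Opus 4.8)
For part~(i) the plan is to extract the $\varphi$-geodesic condition directly from \propref{Proposition 3.5}. Along any slant null curve the triad $\{\xi,\dot C,\varphi\dot C\}$ is a basis of $T_xM$, so comparing the decomposition \eqref{3.6} with the defining equation $\nabla_{\dot C}\dot C=\varphi\dot C$ of a $\varphi$-geodesic shows that $C(t)$ is a $\varphi$-geodesic if and only if $\alpha=0$, $\beta=0$ and $\gamma=1$, with $\alpha,\beta,\gamma$ given by \eqref{3.7}--\eqref{3.9}. First I would use $\beta=0$ together with \eqref{3.8} to deduce $b\dot b=0$ on $I$; since $b$ is smooth and $I$ is connected, this forces $b$ to be constant and hence $\dot b=0$. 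Substituting $\dot b=0$ into \eqref{3.9} gives $\gamma=-a$, so $\gamma=1$ is equivalent to $a=-1$, while substituting $\dot b=0$ into \eqref{3.7} gives $\alpha=b$, so $\alpha=0$ is equivalent to $b=0$. For the converse, if $b=0$ and $a=-1$ then $\dot b=0$ and $a^4+b^2=1$, and \eqref{3.7}--\eqref{3.9} return $\alpha=\beta=0$, $\gamma=1$, \ie $\nabla_{\dot C}\dot C=\varphi\dot C$. This settles~(i).

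For part~(ii) I would fix $b=0$ and $a=-1$, so that $\nabla_{\dot C}\dot C=\varphi\dot C$ by~(i), and verify directly that ${\bf\overline F}=\{\dot C,\ \overline N=-\xi-\tfrac12\dot C,\ \overline W=\varphi\dot C\}$ is a Frenet frame obeying the Cartan equations \eqref{2.10} with $\bar k_2=\tfrac12$. The first step is the frame relations \eqref{3'}: using the B-metric identity $g(\varphi\dot C,\varphi\dot C)=-g(\dot C,\dot C)+\eta(\dot C)^2=a^2=1$ together with $g(\dot C,\xi)=a=-1$, $g(\dot C,\dot C)=0$ and $b=g(\dot C,\varphi\dot C)=0$, one finds $g(\dot C,\overline N)=-a=1$, $g(\overline W,\overline W)=1$, and $g(\overline N,\overline N)=g(\overline N,\overline W)=g(\dot C,\overline W)=0$. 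The second step is differentiation along $C$: by \eqref{2.3} we have $\nabla_{\dot C}\xi=-\varphi\dot C$, hence $\nabla_{\dot C}\overline N=\varphi\dot C-\tfrac12\varphi\dot C=\tfrac12\overline W$; and writing $\nabla_{\dot C}(\varphi\dot C)=(\nabla_{\dot C}\varphi)\dot C+\varphi(\varphi\dot C)$ and inserting the Sasaki-like formula $(\nabla_{\dot C}\varphi)\dot C=-g(\dot C,\dot C)\xi-\eta(\dot C)\dot C+2\eta(\dot C)^2\xi$ with $a=-1$, together with $\varphi^2\dot C=-\dot C+\eta(\dot C)\xi$, a short computation yields $\nabla_{\dot C}\overline W=\xi=-\tfrac12\dot C-\overline N$. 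Combined with $\nabla_{\dot C}\dot C=\overline W$, this is precisely \eqref{2.10} with $k_1=1$ and Cartan curvature $\bar k_2=\tfrac12$; in particular the function $h$ vanishes, so the original parameter is distinguished, and $g(\nabla_{\dot C}\dot C,\nabla_{\dot C}\dot C)=g(\varphi\dot C,\varphi\dot C)=1$. Since $k_1=1\neq0$ the curve is non-geodesic, so by the uniqueness of the Cartan Frenet frame recalled in \secref{sec-2} (following \cite{D-J}), ${\bf\overline F}$ is the Cartan Frenet frame of $C$.

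Everything here is elementary once \propref{Proposition 3.5} and the Cartan characterization from \secref{sec-2} are in hand; the steps that deserve a moment's care are the passage from $b\dot b=0$ to ``$b$ is constant'' (which genuinely uses smoothness of $b$ and connectedness of $I$, not merely the pointwise identity) and the sign bookkeeping after substituting $a=-1$ into the Sasaki-like identity when evaluating $\nabla_{\dot C}(\varphi\dot C)$. I would also confirm the relations \eqref{3'} explicitly before quoting the uniqueness theorem, since that theorem presupposes ${\bf\overline F}$ is already a genuine Frenet frame along $C$.
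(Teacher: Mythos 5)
Your proposal is correct, and it splits naturally into two comparisons. For part (i) you follow essentially the paper's route: the paper simply states that comparing \eqref{3.6}--\eqref{3.9} with $\nabla_{\dot C}\dot C=\varphi\dot C$ forces $b=0$, $a=-1$, and your elimination ($\beta=0\Rightarrow b\dot b=0\Rightarrow b^2$ constant $\Rightarrow b$ constant, then $\alpha=b$, $\gamma=-a$) just supplies the details the paper leaves implicit; note you could shortcut the connectedness argument by observing that $\alpha=0$ with $\gamma=1$ already forces $b=0$ pointwise, since $-2(a^4+b^2)+a\dot b=0$ would make $\gamma=0$. For part (ii) your route is genuinely different from the paper's: the paper obtains $\overline W=\varphi\dot C$ and $\bar k_1=1$ from \eqref{3.38} and $g(\varphi\dot C,\varphi\dot C)=a^2=1$, then reads off $\overline N$ and $\bar k_2$ by substituting $a=-1$, $b=0$ into the general formulas \eqref{3.10}, \eqref{3.24}, \eqref{3.25} and resolving the signs via \eqref{3.18} and $\bar k_1=1$; you instead verify from scratch that $\{\dot C,\,\overline N=-\xi-\tfrac12\dot C,\,\overline W=\varphi\dot C\}$ satisfies \eqref{3'} and the Cartan equations \eqref{2.10} using \eqref{2.3} and the Sasaki-like identity, and then invoke uniqueness of the Cartan frame. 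Your computation checks out (including $\nabla_{\dot C}\overline W=\xi=-\tfrac12\dot C-\overline N$), it is self-contained, and it avoids the $\epsilon,\epsilon_1$ bookkeeping, at the cost of redoing work that \propref{Proposition 3.7} already encodes; the only cosmetic slip is writing $\varphi(\varphi\dot C)$ where the product rule gives $\varphi(\nabla_{\dot C}\dot C)$ --- these coincide precisely because the $\varphi$-geodesic condition is in force, so no harm is done, but the justification should be stated.
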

\begin{proof}
(i) $C(t)$ is a $\varphi $-geodesic if and only if 
\begin{equation}\label{3.38}
\nabla _{\dot C}\dot C=\varphi \dot C. 
\end{equation}
Using \eqref{3.6}, \eqref{3.7}, \eqref{3.8} and \eqref{3.9} we see that \eqref{3.38} is fulfilled if and only if $b=0$ and $a=-1$. 
\par
(ii) If $C(t)$ is a $\varphi $-geodesic, then from \eqref{3.38} and $g(\varphi \dot C,\varphi \dot C)=a^2=1$ it follows that $\overline W=\varphi \dot C$, $\bar k_1=1$. Thus,  $C(t)$ is a null Cartan curve. Now, replacing  $a$ and $b$ in \eqref{3.10}, \eqref{3.24}, \eqref{3.25} by $-1$ and $0$, respectively, we find $k_1=\epsilon (-1)$, $\overline N=-\xi -\frac{1}{2}\dot C$, $\bar k_2=\epsilon _1\epsilon \left(-\frac{1}{2}\right)$. By using  \eqref{3.18} and $\bar k_1=1$ we get $\epsilon _1\epsilon =-1$ which completes the proof.
\end{proof}
From Corollary \ref{Corollary 3.12} and Proposition \ref{Proposition 3.14} we obtain
\begin{cor}\label{Corollary 3.15}
A $\varphi $-geodesic slant null curve in a 3-dimensional Sasaki-like almost contact B-metric manifold 
is a generalized helix.
\end{cor}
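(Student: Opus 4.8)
The plan is to combine the two previously proved results—\corref{Corollary 3.12} (characterising when a framed null slant curve $(C(t),{\bf\overline F})$ is a generalized helix) and \propref{Proposition 3.14} (describing $\varphi$-geodesic slant null curves)—and simply check that a $\varphi$-geodesic satisfies the conditions of \corref{Corollary 3.12}. So this corollary should be almost immediate once the earlier statements are in hand, and the ``main obstacle'' is really just bookkeeping of constants.

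Concretely, the first step is to invoke \propref{Proposition 3.14}(i): if $C(t)$ is a $\varphi$-geodesic slant null curve in a $3$-dimensional Sasaki-like almost contact B-metric manifold, then $b=0$ and $a=-1$. In particular $b$ is a constant, so $C(t)$ is indeed a non-geodesic framed null curve $(C(t),{\bf\overline F})$ in the sense of the earlier sections (non-geodesic because, by \propref{Proposition 3.14}(ii), $\bar k_1=1\neq 0$), and \corref{Corollary 3.12} applies to it.

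The second step is to verify the two conditions in \eqref{3.37}. With $a=-1$ we have $a^4=1$, hence $1-a^4=0=b^2$, so the first condition $b^2=1-a^4$ holds, and moreover $a=-1\in[-1,0)$, so $a$ lies in the required range $[-1,0)\cup(0,1]$. For the second condition, by \propref{Proposition 3.14}(ii) we have $\bar k_2=\tfrac12$, while $\tfrac{a^2}{2}=\tfrac{(-1)^2}{2}=\tfrac12$; hence $\bar k_2=\tfrac{a^2}{2}$ as well. Thus both requirements of \corref{Corollary 3.12} are satisfied.

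Therefore, by \corref{Corollary 3.12}, the curve $(C(t),{\bf\overline F})$ is a generalized helix, which is exactly the assertion of \corref{Corollary 3.15}. I do not expect any genuine difficulty here; the only thing to be slightly careful about is confirming that a $\varphi$-geodesic slant null curve is non-geodesic (so that the framed-null-curve machinery and \corref{Corollary 3.12}, which presuppose $k_1\neq 0$, are legitimately available)—and this is guaranteed by $\bar k_1=1$ from \propref{Proposition 3.14}(ii).
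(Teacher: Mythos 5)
Your proof is correct and follows exactly the route the paper intends (the paper simply states the corollary "From Corollary 3.12 and Proposition 3.14"): you use Proposition \ref{Proposition 3.14} to get $b=0$, $a=-1$, note $\bar k_1=1\neq 0$ so the framed-null-curve setting applies, and check that the conditions \eqref{3.37} of Corollary \ref{Corollary 3.12} hold. Nothing is missing.
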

\section{Legendre null curves in 3-dimensional Sasaki-like almost contact B-metric manifolds}\label{sec-4}
Let us emphasize that  according to Proposition \ref{Proposition 3.2} if $C$ is a Legendre  null curve in a 3-dimensional almost contact B-metric manifold, then  the function b is not identically zero.\\
Following the proof of Proposition \ref{Proposition 3.5} and Proposition \ref{Proposition 3.7} we infer that they are also valid if one of $a$ or $b$ is zero. Thus, substituting $a$ in \eqref{3.7}, \eqref{3.8}, \eqref{3.9} and \eqref{3.10} with $0$ we state
\begin{cor}\label{Corollary 3.16}
Let $C(t)$ be a Legendre null curve in a 3-dimensional Sasaki-like almost contact B-metric manifold. Then 
$\nabla _{\dot {C}}\dot {C}$ and the curvature $k_1$ are given by
\begin{equation}\label{3.39}
\nabla _{\dot {C}}\dot {C}=b\xi +\frac{\dot b}{2b}\dot C ,
\end{equation}
\begin{equation}\label{3.40}
k_1=\epsilon b, \quad \epsilon =\pm 1 .
\end{equation}
\end{cor}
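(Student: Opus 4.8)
The plan is to obtain this as a direct specialization of Proposition \ref{Proposition 3.5} and its proof to the Legendre case $a=0$. First I would record that, by Proposition \ref{Proposition 3.2}, a Legendre null curve cannot have $b$ vanishing identically, so on the set where $b\neq 0$ the triad $\{\xi,\dot C,\varphi\dot C\}$ is again a basis of $T_xM$ along $C$, and one may write $\nabla_{\dot C}\dot C=\alpha\xi+\beta\dot C+\gamma\varphi\dot C$ as in \eqref{3.6}.

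Next I would rerun the linear-algebra step of the proof of Proposition \ref{Proposition 3.5} with $a=0$. Lemma \ref{Lemma 3.4} then gives $g(\nabla_{\dot C}\dot C,\xi)=b$, $g(\nabla_{\dot C}\dot C,\dot C)=0$ and $g(\nabla_{\dot C}\varphi\dot C,\dot C)=\frac{\dot b}{2}$ (the terms $a^3$ and $a^2$ drop out), so the $3\times 3$ system for $\alpha,\beta,\gamma$ collapses to $\alpha=b$, $b\gamma=0$, $b\beta=\frac{\dot b}{2}$; its determinant is now $\Delta=-b^2\neq 0$, whence $\alpha=b$, $\gamma=0$, $\beta=\frac{\dot b}{2b}$. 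This is exactly \eqref{3.39}, and it coincides with the formulas obtained by formally putting $a=0$ in \eqref{3.7}, \eqref{3.8} and \eqref{3.9}. For the curvature I would use \eqref{3.11}, which with $a=0$ and $\gamma=0$ reduces to $k_1^2=\alpha^2=b^2$; equivalently, set $a=0$ in \eqref{3.10} and use $\sqrt{b^2}=|b|$. Either way $k_1=\pm b$, i.e. $k_1=\epsilon b$ with $\epsilon=\pm 1$, which is \eqref{3.40}.

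The only place that needs a word of care is the division by $b$: it is legitimate precisely because Proposition \ref{Proposition 3.2} rules out $b\equiv 0$ for a Legendre null curve, and the sign of $b$ together with the factor $|b|$ arising from $\sqrt{b^2}$ is absorbed into the ambiguous sign $\epsilon$. Neither point is a genuine obstacle, so the whole argument is essentially a one-line specialization once Lemma \ref{Lemma 3.4} and Proposition \ref{Proposition 3.5} are in hand.
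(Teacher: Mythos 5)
Your proposal is correct and takes essentially the same route as the paper, which obtains Corollary \ref{Corollary 3.16} precisely by noting that the proof of Proposition \ref{Proposition 3.5} remains valid when $a=0$ (with $b\not\equiv 0$ by Proposition \ref{Proposition 3.2}) and then substituting $a=0$ into \eqref{3.7}--\eqref{3.10}; your explicit rerun of the linear system and of \eqref{3.11} is just that specialization written out.
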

The following proposition is an immediate consequence from \eqref{3.40} 
\begin{prop}\label{Proposition 3.17}
There exist no geodesic Legendre null curves in a 3-dimensional Sasaki-like almost contact B-metric manifold.
\end{prop}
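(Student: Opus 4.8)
The plan is to read the result off directly from the curvature formula in Corollary~\ref{Corollary 3.16}, combined with the non-triviality of the function $b$ recorded at the start of this section. The only input from the general theory of null curves I would invoke is the standard fact (see \cite{D-B}, and as already used in the proof of Corollary~\ref{Corollary 3.6}) that a null curve is geodesic precisely when its curvature function $k_1$ vanishes identically; this property is independent of the parameter and of the chosen screen distribution.

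I would then argue by contradiction. Suppose $C(t)$ is a geodesic Legendre null curve in a 3-dimensional Sasaki-like almost contact B-metric manifold. Being geodesic forces $k_1\equiv 0$ along $C$, while \eqref{3.40} gives $k_1=\epsilon b$ with $\epsilon=\pm 1$, so $b\equiv 0$ on $C$. On the other hand, a Legendre null curve is exactly a slant null curve with $a=0$, and Proposition~\ref{Proposition 3.2} forbids $b$ from vanishing identically on such a curve --- which is precisely the remark recalled at the start of this section. The two statements contradict one another, so no geodesic Legendre null curve can exist.

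I do not expect any genuine obstacle here: the argument is a two-line deduction from \eqref{3.40} and Proposition~\ref{Proposition 3.2}. The only point worth recording is that $k_1$ in \eqref{3.40} is the curvature computed with respect to the general Frenet frame ${\bf F}$ of $C$; since being a geodesic is a frame- and parameter-independent property of the curve, applying the equivalence between being a geodesic and $k_1$ vanishing identically in this frame is legitimate, exactly as was done for Corollary~\ref{Corollary 3.6}.
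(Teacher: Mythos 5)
Your argument is correct and is essentially the paper's own proof: the paper declares the proposition an immediate consequence of \eqref{3.40}, implicitly combining $k_1=\epsilon b$ with the remark at the start of Section 4 that, by Proposition \ref{Proposition 3.2}, $b$ cannot vanish identically on a Legendre null curve, and with the geodesic $\Leftrightarrow$ $k_1=0$ criterion from \cite{D-B}. You have simply spelled out the same deduction explicitly.
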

We can rewrite \eqref{3.39} in the following two ways
\[
\nabla _{\dot {C}}\dot {C}=\frac{\dot b}{2b}\dot C+k_1W \quad \text{and}\quad \nabla _{\dot {C}}\dot {C}=\bar k_1\overline W
\]
where
\begin{equation}\label{3.41}
W=\epsilon \xi , \quad \epsilon =\pm 1, \quad \bar k_1=\epsilon _1\epsilon b, \quad\overline W=\epsilon _1\left(\epsilon \xi +\epsilon \frac{\dot b}{2b^2}\dot C\right), \, \,  \epsilon _1=\pm 1.
\end{equation}
Hence, the unique screen vector bundles of a Legendre null curve $C(t)$ are $S(TC^\bot)={\rm span}\{W\}$ and $\bar{S}(TC^\bot)={\rm span}\{\overline W\}$, where $W$ and $\overline W$ are the unit spacelike vector fields orthogonal to $\dot C$ given in \eqref{3.41}. Further, substituting $a$ in \eqref{3.21}, \eqref{3.22}, \eqref{3.23}, \eqref{3.24} and \eqref{3.25} with $0$ we obtain  the following corollary from Proposition \ref{Proposition 3.7}
\begin{cor}\label{Corollary 3.18}
Let $M$ be a 3-dimensional Sasaki-like almost contact B-metric manifold and $C(t)$ be a Legendre null curve in $M$.
\par
\item (i) If the screen vector bundle of $C(t)$ is $S(TC^\bot)={\rm span}\{W=\epsilon \xi \}$, then
\[
N=\frac{1}{b}\varphi \dot C, \qquad h=\frac{\dot b}{2b}, \qquad k_2=0.
\]
\item (ii) If the screen vector bundle of $C(t)$ is $\bar S(TC^\bot)={\rm span}\{\overline W\}$, where 
$\overline W$ is given in \eqref{3.41}, then the original parameter $t$ of $C(t)$ is distinguished and
\begin{equation}\label{3.42}
\overline N=-\frac{\dot b}{2b^2}\xi -\frac{\dot b^2}{8b^4}\dot C+\frac{1}{b}\varphi \dot C, \qquad \bar k_2=\epsilon _1\left(\frac{4b\ddot b-7\dot b^2}{8b^3}\right).
\end{equation}
\end{cor}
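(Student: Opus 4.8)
The plan is to deduce Corollary~\ref{Corollary 3.18} by specializing Proposition~\ref{Proposition 3.5} and Proposition~\ref{Proposition 3.7} to the Legendre case $a=0$. First I would check that this specialization is legitimate. The proofs of those two propositions invoke $a$ only through the nonvanishing of the determinants $\Delta=-(a^4+b^2)$ and $\Delta_1=a^4+b^2$ of the linear systems expressing $\nabla_{\dot C}\dot C$ and $N$ in the basis $\{\xi,\dot C,\varphi\dot C\}$; for $a=0$ these equal $-b^2$ and $b^2$. Now the argument in the proof of Proposition~\ref{Proposition 3.2} in fact gives the pointwise statement that $b(t)\neq0$ for every $t$ along a Legendre null curve -- otherwise $g|_{\mathbb{D}}$ would be totally isotropic at that point, contradicting its signature $(1,1)$ -- so $a^4+b^2=b^2\neq0$ everywhere and every division by $b$ below is meaningful, while $\{\xi,\dot C,\varphi\dot C\}$ remains a basis of $T_xM$ since $\dot C$ and $\varphi\dot C$ are linearly independent elements of $\mathbb{D}$, as shown there. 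Finally, by Proposition~\ref{Proposition 3.17} a Legendre null curve is automatically non-geodesic, so Proposition~\ref{Proposition 3.7} genuinely applies. Hence the formulas \eqref{3.6}--\eqref{3.10} and \eqref{3.21}--\eqref{3.25} all hold verbatim with $a=0$.

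For part (i) I would simply put $a=0$ into \eqref{3.21}, \eqref{3.22}, \eqref{3.23}. In \eqref{3.21} the $\xi$- and $\dot C$-coefficients carry factors $a^3$ and $a^2$, so they vanish and $N=\frac1b\varphi\dot C$; \eqref{3.22} gives $h=\beta=\frac{b\dot b}{2b^2}=\frac{\dot b}{2b}$, matching the $\dot C$-term in \eqref{3.39}; and the numerator of \eqref{3.23} is divisible by $a^2$, hence $k_2=0$. As a consistency check, $W=\epsilon\xi$ follows from \eqref{3.13} at $a=0$, in agreement with \eqref{3.41}; alternatively one can verify all three identities directly from the Frenet equations \eqref{2.7}--\eqref{2.9} using Lemma~\ref{Lemma 3.4} with $a=0$ together with \eqref{3.39}.

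For part (ii) I would first observe, exactly as in the proof of Proposition~\ref{Proposition 3.7}(ii), that \eqref{3.17} has no $\dot C$-component, so $\overline h=0$ and the original parameter $t$ is distinguished for the screen $\bar S(TC^\bot)=\mathrm{span}\{\overline W\}$ from \eqref{3.41}. Then I would substitute $a=0$ into \eqref{3.24} and \eqref{3.25}: in \eqref{3.24} the $a^3$-part of the $\xi$-coefficient and the $a^2$-parts of the $\dot C$-coefficient drop out, and the common quantity $-2(a^4+b^2)+a\dot b$ becomes $-2b^2$, so $\overline N=-\frac{\dot b}{2b^2}\xi-\frac{\dot b^2}{8b^4}\dot C+\frac1b\varphi\dot C$; in \eqref{3.25} the numerator collapses to $8b^3\ddot b-14b^2\dot b^2=2b^2(4b\ddot b-7\dot b^2)$ and the denominator to $4\sqrt{b^2}\,(2b^2)^2=16\,|b|\,b^4$, giving $\bar k_2$ equal to $\pm1$ times $\frac{4b\ddot b-7\dot b^2}{8b^3}$.

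There is no substantive obstacle here; the only point requiring a little care is the last simplification. Because $\sqrt{a^4+b^2}=\sqrt{b^2}=|b|$ rather than $b$, a factor of $\mathrm{sgn}(b)$ enters $\bar k_2$, but it combines with the free signs $\epsilon,\epsilon_1$ into a new $\pm1$ sign, which one simply relabels $\epsilon_1$ -- the same phenomenon already seen in passing from \eqref{3.10} to \eqref{3.40}. Beyond this sign bookkeeping, the proof amounts to tracking which monomials in $a$ survive the substitution $a=0$.
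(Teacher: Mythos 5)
Your proposal is correct and follows essentially the same route as the paper, which obtains Corollary~\ref{Corollary 3.18} precisely by substituting $a=0$ into \eqref{3.21}--\eqref{3.25} of Proposition~\ref{Proposition 3.7} (the paper states before Corollary~\ref{Corollary 3.16} that Propositions~\ref{Proposition 3.5} and \ref{Proposition 3.7} remain valid when $a=0$). Your additional checks -- pointwise nonvanishing of $b$ via the argument of Proposition~\ref{Proposition 3.2}, non-geodesicity via Proposition~\ref{Proposition 3.17}, and absorbing the $\sqrt{b^2}=\vert b\vert$ sign into the free $\pm 1$ factors -- only make explicit what the paper leaves implicit.
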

A Legendre null curve $(C(t),{\bf \overline F})$ is a framed null curve with respect to a Frenet frame ${\bf \overline F}=\{\dot {C}, \overline N, \overline W\}$, where $\overline W$ and $\overline N$ are given in
\eqref{3.41} and \eqref{3.42}.
\begin{defn}\label{Definition 3.19}\cite{D-J, FGL}
A null Cartan helix of zero Cartan curvature $k_2$ is called null cubic.
\end{defn}
Now, taking into account the expressions for $\bar k_1$ and $\bar k_2$ in \eqref{3.41} and \eqref{3.42}, respectively, we state
\begin{prop}\label{Proposition 3.20}
Let $(C(t),{\bf \overline F})$ be a Legendre null curve in a 3-dimensional Sasaki-like almost contact B-metric manifold $M$. Then 
\par
\item (i) $(C(t),{\bf \overline F})$ is of constant curvatures $\bar k_1$ and $\bar k_2$ if and only if $b$ is a nonzero constant.
\par
\item (ii) For a nonzero constant $b$ we have $\bar k_2=0$.
\par
\item (iii) $(C(t),{\bf \overline F})$ is a null cubic if and only if $\epsilon _1\epsilon b=1$.
\end{prop}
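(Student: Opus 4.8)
The plan is to read off the three equivalences directly from the expressions for $\bar k_1$ and $\bar k_2$ recorded in \eqref{3.41} and \eqref{3.42}, so almost no new computation is needed; the work is just careful case analysis together with the definitions of null Cartan helix (\dfnref{Definition 3.8}) and null cubic (\dfnref{Definition 3.19}). First I would recall that for a Legendre null curve $C(t)$ we always have $b\not\equiv 0$ by \propref{Proposition 3.2}, and that $(C(t),{\bf \overline F})$ is a framed null curve by \corref{Corollary 3.18}(ii), so $t$ is a distinguished parameter; hence $\bar k_1=\epsilon_1\epsilon b$ and $\bar k_2=\epsilon_1\bigl(\tfrac{4b\ddot b-7\dot b^2}{8b^3}\bigr)$ are legitimate curvature functions of $C$ with respect to ${\bf \overline F}$.

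For part (i), the direction ''$\Longleftarrow$'' is immediate: if $b$ is a nonzero constant then $\dot b=\ddot b=0$, so $\bar k_1=\epsilon_1\epsilon b$ is a nonzero constant and $\bar k_2=0$. For ''$\Longrightarrow$'', assume $\bar k_1$ is constant; since $\epsilon_1\epsilon=\pm1$ this forces $b$ itself to be constant on $I$, and it is nonzero because $b\not\equiv 0$ and $b$ is continuous — actually, more carefully, $\bar k_1$ constant gives $b$ constant, and if that constant were $0$ we would contradict $b\not\equiv0$; alternatively $\bar k_1=0$ is excluded since a framed null curve here is non-geodesic (indeed \propref{Proposition 3.17} shows $k_1=\epsilon b\ne 0$, so $b$ is nowhere zero). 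Thus $b$ is a nonzero constant, which is what we wanted; note we did not even need the hypothesis that $\bar k_2$ is constant for this direction, but stating the equivalence with both curvatures constant is harmless. Part (ii) is then just the observation already used: for nonzero constant $b$, $\dot b=\ddot b=0$ gives $\bar k_2=0$ from \eqref{3.42}.

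For part (iii), by \dfnref{Definition 3.19} a null cubic is a null Cartan helix with $k_2=0$, and a null Cartan curve (with its Cartan Frenet frame) requires $\bar k_1=1$. So $(C(t),{\bf \overline F})$ is a null cubic exactly when $\bar k_1=1$ and $\bar k_2$ is the constant $0$. By part (i) the condition $\bar k_1=\epsilon_1\epsilon b=1$ already forces $b$ to be a nonzero constant, and then part (ii) automatically yields $\bar k_2=0$; conversely if $\epsilon_1\epsilon b\ne 1$ then $\bar k_1\ne1$ (either non-constant or a constant $\ne1$) so the frame is not a Cartan Frenet frame and $C$ is not a null cubic. Hence $(C(t),{\bf \overline F})$ is a null cubic if and only if $\epsilon_1\epsilon b=1$.

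The only point requiring a little care — and the closest thing to an obstacle — is the bookkeeping in part (i) ''$\Longrightarrow$'': one must use that $b$ is nowhere zero on $I$ (so that dividing by $b$, $b^2$, $b^3$ in \eqref{3.41}--\eqref{3.42} is legitimate and so that $\bar k_1\ne0$), which follows from \eqref{3.40} together with the standing assumption that $\dot C\ne0$; and one must be careful that ''$\bar k_1$ constant'' genuinely forces ''$b$ constant'' rather than merely ''$|b|$ constant'' — but since $b$ is continuous and nowhere zero on the interval $I$, it has constant sign, so $\bar k_1=\epsilon_1\epsilon b$ constant does give $b$ constant. Everything else is a direct substitution into the already-established formulas.
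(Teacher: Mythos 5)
Your proposal is correct and takes essentially the same route as the paper, which states Proposition \ref{Proposition 3.20} as an immediate consequence of the formulas $\bar k_1=\epsilon_1\epsilon b$ and $\bar k_2=\epsilon_1\bigl(\tfrac{4b\ddot b-7\dot b^2}{8b^3}\bigr)$ in \eqref{3.41} and \eqref{3.42}, exactly the reading-off argument you spell out. The only tiny inaccuracy is your appeal to Proposition \ref{Proposition 3.17} (and to \eqref{3.40}) for $b$ being nowhere zero --- that proposition only excludes $k_1\equiv 0$, while the pointwise nonvanishing of $b$ really comes from running the argument of Proposition \ref{Proposition 3.2} at each point of $C$ --- but this does not affect your proof, since in (i) the constancy of $b$ together with $b\not\equiv 0$ already gives a nonzero constant.
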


\section{Non-null slant curves in a 3-dimensional Sasaki-like almost
 contact  B-metric manifold induced from some slant and Legendre null curves}\label{sec-5}
%Biharmonic  curves have been studied extensively in \cite{F , }. First in this section we briefly recall the concept of biharmonic curves.
A curve $\gamma : I\longrightarrow M$ in a 3-dimensional Lorentzian manifold $(M,g)$ is said to be {\it a unit speed curve} (or $\gamma $ is parameterized by arc length $s$) if $g(\gamma ^\prime ,\gamma ^\prime)=\epsilon _1=\pm 1$, where $\gamma ^\prime =\frac{{\rm d}\gamma }{{\rm d}s}$ is the velocity vector field. A unit speed curve 
$\gamma $ is said to be {\it spacelike} or {\it timelike} if $\epsilon _1=1$ or $\epsilon _1=-1$, respectively. A unit speed curve $\gamma $ is said to be {\it a Frenet curve} if 
%$g(\gamma ^{''},\gamma ^{''})\neq 0$.\\
one of the following three cases holds \cite{W}:
\begin{itemize}
\item $\gamma $ is of osculating order 1 that is $\nabla _{\gamma ^\prime }\gamma ^\prime =0$, i.e. 
$\gamma $ is a geodesic;
\item $\gamma $ is of osculating order 2, i.e. there exist two orthonormal vector fields $E_1$, $E_2$ and a positive function $k$ (the curvature) along $\gamma $ such that $E_1=\gamma ^\prime $, $g(E_2,E_2)=\epsilon _2=\pm 1$ and
\[
\nabla _{\gamma ^\prime }E_1=\epsilon _2kE_2 , \quad \nabla _{\gamma ^\prime }E_2=-\epsilon _1kE_1; 
\]
\item $\gamma $ is of osculating order 3, i.e. there exist three orthonormal vector fields $E_1$, $E_2$, 
$E_3$ and two positive functions $k$ (the curvature) and $\tau $ (the torsion) along $\gamma $ such that 
$E_1=\gamma ^\prime $, $g(E_2,E_2)=\epsilon _2=\pm 1$, $g(E_3,E_3)=\epsilon _3=\pm 1$, 
$\epsilon _3=-\epsilon _1\epsilon _2$ and 
\[
\nabla _{\gamma ^\prime }E_1=\epsilon _2kE_2 , \quad \nabla _{\gamma ^\prime }E_2=-\epsilon _1kE_1+\epsilon _3\tau E_3 , \quad \nabla _{\gamma ^\prime }E_3=-\epsilon _2\tau E_2. 
\]
\end{itemize} 
As in the case of Riemannian geometry, a Frenet curve  in a 3-dimensional Lorentzian manifold is a geodesic if and only if its curvature $k$ vanishes. Also a Frenet curve $\gamma $ with a curvature $k$ and a torsion $\tau $ is called \cite{I}:
\begin{itemize}
\item {\it a pseudo-circle} if $k=const$ and $\tau =0$;
\item {\it a helix} if $k=const$ and $\tau =const$;
\item {\it a proper helix} if $\gamma $ is a helix which is not a circle;
\item {\it a  generalized helix} if $\displaystyle\frac{k}{\tau}=const$ but $k$ and $\tau $ are not constant.
\end{itemize}
Analogously as in \cite{W} we say that a Frenet curve $\gamma (s)$ in an almost contact B-metric manifold $(M,\varphi,\xi,\eta,g)$ is  slant if $\eta (\gamma ^\prime (s))=a=const$.\\
Since there exist two B-metrics $g$ and $\widetilde g$ on an almost contact B-metric manifold $M$, we can consider a curve $\gamma $ in $M$ with respect to both metrics. \\
In this section we investigate non-null curves with respect to  $\widetilde g$ in a 3-dimensional Sasaki-like almost contact B-metric manifold, induced from Legendre and slant null curves with respect to $g$ with constant curvatures.
% If we consider the arc length, s, of the curve x(t) as the parameter and indicate x(t) = x(t(s)), then x′(s)=dxds is a spacelike unit tangent vector field of the curve x(s).
\begin{thm}\label{Theorem 4.1}
Let $M$ be a 3-dimensional Sasaki-like almost contact B-metric manifold and $(C(t), {\bf \overline F})$ be a Legendre null curve with respect to $g$ in $M$ of constant curvatures $\bar k_1$ and $\bar k_2$.
Then the curve $C$ with respect to $\widetilde g$ is
\par
\item (i) spacelike if $b>0$ or timelike if $b<0$;
\par
(ii) a Legendre curve;
\par
(iii) a geodesic.
\end{thm}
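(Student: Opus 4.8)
The plan is to work entirely in the basis $\{\xi, \dot C, \varphi\dot C\}$ along $C$, using the formula \eqref{2.4} relating $\widetilde\nabla$ and $\nabla$, together with Corollary \ref{Corollary 3.16} and Corollary \ref{Corollary 3.18}. First I would record that for a Legendre null curve $b$ is a nonzero constant by Proposition \ref{Proposition 3.20}(i) (since $\bar k_1,\bar k_2$ are assumed constant), and hence $\dot b = 0$; this simplification drives everything. For (i), I compute $\widetilde g(\dot C,\dot C) = g(\dot C,\varphi\dot C) + \eta(\dot C)^2 = b + 0 = b$ directly from the definition of $\widetilde g$ and the Legendre condition $\eta(\dot C)=0$. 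So $C$ is spacelike with respect to $\widetilde g$ when $b>0$ and timelike when $b<0$; note $b\neq 0$ rules out the degenerate case, which is exactly why Proposition \ref{Proposition 3.2} was needed.

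For (ii), I must check $\widetilde\eta(C') = a = \const$ where $C'$ is the $\widetilde g$-unit-speed reparameterization. Since $\widetilde\eta = \eta$ (the $1$-form is part of the contact structure, unchanged), and $\eta(\dot C) = 0$ along the Legendre null curve, any positive rescaling $C' = \dot C/\sqrt{|b|}$ still satisfies $\eta(C') = 0$. Hence $C$ is a Legendre curve (slant with $a=0$) with respect to $\widetilde g$ as well. This is essentially immediate once one observes that $\widetilde\eta=\eta$ and that reparameterization preserves the vanishing of $\eta(\dot C)$.

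For (iii), the substance is to compute $\widetilde\nabla_{\dot C}\dot C$ and show it is proportional to $\dot C$, so that after passing to arc length the acceleration vanishes. Using \eqref{2.4} with $X=Y=\dot C$ gives $\widetilde\nabla_{\dot C}\dot C - \nabla_{\dot C}\dot C = -\{g(\dot C,\varphi\dot C) - g(\varphi\dot C,\varphi\dot C)\}\xi = -\{b - 0\}\xi = -b\,\xi$, since $g(\varphi\dot C,\varphi\dot C) = -g(\dot C,\dot C) + \eta(\dot C)^2 = 0$. Combining with \eqref{3.39}, which reads $\nabla_{\dot C}\dot C = b\xi + \frac{\dot b}{2b}\dot C$, and using $\dot b=0$, I get $\widetilde\nabla_{\dot C}\dot C = b\xi + 0\cdot\dot C - b\xi = 0$. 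Thus $\widetilde\nabla_{\dot C}\dot C = 0$, and since $\dot C$ is a constant-length vector field for $\widetilde g$ (length $\sqrt{|b|}$, constant), the arc-length parameter is an affine rescaling of $t$, so $\widetilde\nabla_{C'}C' = 0$ as well. Hence $C$ is a $\widetilde g$-geodesic, which in particular forces its $\widetilde g$-curvature to vanish, consistent with the earlier characterization of geodesics among Frenet curves.

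I do not expect a genuine obstacle here: the proof is a short computation once $\dot b=0$ is invoked. The only points requiring a little care are (a) justifying $\widetilde\eta = \eta$ and $g(\varphi\dot C,\varphi\dot C)=0$ before substituting into \eqref{2.4}, and (b) being explicit that passing from the original parameter $t$ to a $\widetilde g$-arc-length parameter is an affine change (because $\widetilde g(\dot C,\dot C)=b$ is constant), so that the vanishing of $\widetilde\nabla_{\dot C}\dot C$ immediately yields the geodesic equation in the normalized parameter. The sign of $b$ only enters in (i) to decide spacelike versus timelike.
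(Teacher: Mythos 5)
Your proposal is correct and follows essentially the same route as the paper: invoke Proposition \ref{Proposition 3.20}(i) to get $b$ a nonzero constant, compute $\widetilde g(\dot C,\dot C)=b$ and $\widetilde\eta(\dot C)=\eta(\dot C)=0$ for (i) and (ii), and combine \eqref{2.4} with \eqref{3.39} (with $\dot b=0$) to obtain $\widetilde\nabla_{\dot C}\dot C=0$ for (iii). The only cosmetic difference is that the paper works directly with the arc-length velocity $C'=\dot C/\sqrt{|b|}$ throughout, while you first show $\widetilde\nabla_{\dot C}\dot C=0$ and then pass to the (affine) arc-length parameter; these are the same argument.
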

\begin{proof}
(i) From the assertion (i) in Proposition \ref{Proposition 3.20} it follows that $b$ is a nonzero constant. Thus $\widetilde g(\dot C,\dot C)=b\neq 0$. Now, we parameterize $C(t)$ by its 
arc length parameter $\widetilde s$ with respect to $\widetilde g$ given by
\[
\widetilde s=\int^{t}_{0}\sqrt{\vert \widetilde g(\dot C,\dot C)\vert }{\rm d}u=\int^{t}_{0}\sqrt{\vert b\vert }{\rm d}u =\sqrt{\vert b\vert }t .
\]
Then for the tangent vector $C^\prime (\widetilde s)=\displaystyle\dot C(t)\frac{{\rm d}t}{{\rm d}\widetilde s}=\frac{\dot C(t)}{\sqrt{\vert b\vert }}$ of the curve $C(\widetilde s)$ we have 
$\widetilde g(C^\prime ,C^\prime )=\displaystyle\frac{b}{\vert b\vert }=\pm 1$ which confirms the assertion (i).
\par
(ii) By direct calculations we find
\begin{align*}
\begin{array}{ll}
\widetilde \eta (C^\prime )=\widetilde g(C^\prime ,\xi )=\displaystyle\eta (C^\prime )=\frac{1}{\sqrt{\vert b\vert }}\eta (\dot C)=0 , 
\end{array}
\end{align*}
%\begin{equation}\label{4.3}
%\widetilde g(\varphi C^\prime ,\varphi  C^\prime )=\frac{1}{\vert b\vert }\widetilde g(\varphi \dot C,\varphi \dot C)=\frac{1}{\vert b\vert }(-b)=
%\end{equation}
which shows that $C(\widetilde s)$ is a Legendre curve.\\
(iii) By virtue of \eqref{2.4} we get
\begin{align}\label{4.2}
\widetilde \nabla _{C^\prime }C^\prime =\frac{1}{\vert b\vert }\widetilde \nabla _{\dot C}\dot C=
\frac{1}{\vert b\vert }\left(\nabla _{\dot C}\dot C-b\xi \right).
\end{align}
Substituting $\dot b=0$ in \eqref{3.39} we obtain $\nabla _{\dot C}\dot C=b\xi $. The latter equality and \eqref{4.2} imply $\widetilde \nabla _{C^\prime }C^\prime =0$, i.e. $C(\widetilde s)$ is a geodesic.
\end{proof}
\begin{thm}\label{Theorem 4.2}
Let $(C(t), {\bf \overline F})$ be a slant null curve with respect to $g$ in a 3-dimensional 
Sasaki-like almost contact B-metric manifold $M$ for which $b=0$. Then $C$ with respect to 
$\widetilde g$ is a spacelike slant curve in $M$ such that:
\par
\item (i) $C$ is of osculating order 3. The orthonormal vector fields $E_1$, $E_2$, $E_3$ with respect to $\widetilde g$ are given as follows:
%the  along $C$ 
\begin{align}\label{4.3}
E_1(\widetilde s)=C^\prime (\widetilde s)=\frac{\dot C}{\vert a \vert}, \quad \widetilde g(E_1,E_1)=1 ,
\end{align}
where $\widetilde s$ is the arc length parameter of $C(t)$ with respect to $\widetilde g$,
\begin{align}\label{4.4}
E_2(\widetilde s)=-\frac{1}{a} \varphi \dot C+\xi , \quad
\, \widetilde g(E_2,E_2)=1,
\end{align}
\begin{align}\label{4.5}
E_3(\widetilde s)=\frac{1}{\vert a\vert}(\varphi \dot C-\dot C), \quad \widetilde g(E_3,E_3)=-1;
\end{align}
(ii) $C(\widetilde s)$ is a proper helix whose curvature $\widetilde k$ and torsion $\widetilde \tau $ are
\begin{align}\label{4.6}
\widetilde k(\widetilde s)=\widetilde \tau (\widetilde s)=1.
\end{align}
\end{thm}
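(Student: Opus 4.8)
The plan is to specialize the general machinery of Section~3 to the case $b=0$ (with $a\neq 0$, since $b=0$ would otherwise force a contradiction via Proposition~\ref{Proposition 3.2}) and then build the Frenet data for the reparameterized curve with respect to $\widetilde g$. First I would record the consequences of $b=0$: from Lemma~\ref{Lemma 3.4} we get $g(\nabla_{\dot C}\dot C,\dot C)=0$, $g(\nabla_{\dot C}\dot C,\varphi\dot C)=-a^3$, $g(\nabla_{\dot C}\varphi\dot C,\dot C)=a^3$, and from \eqref{3.6}--\eqref{3.9} the clean formula $\nabla_{\dot C}\dot C=a^2\xi - a\,\varphi\dot C$ (noting $b=0$ kills $\beta$). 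Since $g(\varphi\dot C,\varphi\dot C)=-g(\dot C,\dot C)+\eta(\dot C)^2=a^2$, the passage to $\widetilde g$ is governed by \eqref{2.4}: $\widetilde g(\dot C,\dot C)=g(\dot C,\varphi\dot C)+\eta(\dot C)^2=b+a^2=a^2>0$, so $C$ is spacelike with respect to $\widetilde g$, and the arc-length parameter is $\widetilde s=|a|t$, giving $E_1=C'=\dot C/|a|$ and $\widetilde\eta(E_1)=\eta(E_1)=a/|a|=\pm 1$, a constant, so $C$ is a $\widetilde g$-slant curve. This establishes the preamble of the theorem and sets up (i).

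Next I would compute $\widetilde\nabla_{E_1}E_1$. By \eqref{2.4}, $\widetilde\nabla_{\dot C}\dot C=\nabla_{\dot C}\dot C-\{g(\dot C,\varphi\dot C)-g(\varphi\dot C,\varphi\dot C)\}\xi=\nabla_{\dot C}\dot C-(b-a^2)\xi=\nabla_{\dot C}\dot C+a^2\xi=2a^2\xi-a\varphi\dot C$; dividing by $a^2$ gives $\widetilde\nabla_{E_1}E_1=2\xi-\tfrac{1}{a}\varphi\dot C=E_2$ with $E_2$ as in \eqref{4.4}. One then checks $\widetilde g(E_2,E_2)$ using $\widetilde g(\xi,\xi)=\eta(\xi)^2=1$, $\widetilde g(\xi,\varphi\dot C)=g(\xi,\varphi\varphi\dot C)+\eta(\xi)\eta(\varphi\dot C)=g(\xi,-\dot C+\eta(\dot C)\xi)=-a+a\cdot... $ — more carefully $g(\xi,\varphi\varphi\dot C)=-g(\xi,\dot C)+\eta(\dot C)g(\xi,\xi)=-a+a=0$, and $\widetilde g(\varphi\dot C,\varphi\dot C)=g(\varphi\dot C,\varphi\varphi\dot C)+\eta(\varphi\dot C)^2=-g(\varphi\dot C,\dot C)+0=-b=0$; assembling, $\widetilde g(E_2,E_2)=\tfrac{1}{a^2}\cdot 0 - \tfrac{2}{a}\cdot 0+1\cdot 1=1$. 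So $E_2$ is a unit spacelike field, $\epsilon_2=1$, and comparing with the Frenet equation $\widetilde\nabla_{\gamma'}E_1=\epsilon_2\widetilde k E_2$ yields $\widetilde k=1$.

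Then I would differentiate $E_2$ along $E_1$. Using \eqref{2.3} ($\nabla_X\xi=-\varphi X$), the Sasaki-like identity for $(\nabla\varphi)$, and \eqref{2.4} to convert $\nabla$ into $\widetilde\nabla$, I compute $\widetilde\nabla_{E_1}E_2$ and extract its component along $E_1$ (which by the second Frenet equation should equal $-\epsilon_1\widetilde k E_1=-E_1$) and its component along a third unit field $E_3$; the remaining part defines $E_3$ and $\widetilde\tau$ via $\widetilde\nabla_{\gamma'}E_2=-\epsilon_1\widetilde k E_1+\epsilon_3\widetilde\tau E_3$. A direct but short computation should produce $E_3=\tfrac{1}{|a|}(\varphi\dot C-\dot C)$ as in \eqref{4.5}, with $\widetilde g(E_3,E_3)=\tfrac{1}{a^2}\{\widetilde g(\varphi\dot C,\varphi\dot C)-2\widetilde g(\varphi\dot C,\dot C)+\widetilde g(\dot C,\dot C)\}=\tfrac{1}{a^2}\{0-2a^2+a^2\}=-1$, so $\epsilon_3=-1=-\epsilon_1\epsilon_2$ as required, and $\widetilde\tau=1$. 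Finally the third Frenet equation $\widetilde\nabla_{\gamma'}E_3=-\epsilon_2\widetilde\tau E_2$ should be verified automatically (or used as a consistency check). Since $\widetilde k=\widetilde\tau=1$ are constant and equal but $C$ is not a circle ($\widetilde\tau\neq0$), $C(\widetilde s)$ is a proper helix, giving (ii). The main obstacle is the bookkeeping in $\widetilde\nabla_{E_1}E_2$: one must carefully expand $\nabla_{\dot C}(\varphi\dot C)=(\nabla_{\dot C}\varphi)\dot C+\varphi(\nabla_{\dot C}\dot C)$ using the Sasaki-like formula and then correctly apply the $\widetilde\nabla$-$\nabla$ correction term of \eqref{2.4} with the right arguments; everything else is routine linear algebra in the frame $\{\xi,\dot C,\varphi\dot C\}$.
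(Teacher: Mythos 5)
Your overall strategy mirrors the paper's proof (reparameterize by $\widetilde s=|a|t$, use \eqref{2.4} to convert $\nabla$ into $\widetilde\nabla$, build $E_2,E_3$ in the frame $\{\xi,\dot C,\varphi\dot C\}$), but there is a concrete error at the start that propagates. With $b\equiv 0$ (hence $\dot b=0$) the coefficient $\alpha$ in \eqref{3.7} vanishes, because $\alpha$ carries an overall factor $-b$; so \eqref{3.6}--\eqref{3.9} give $\nabla_{\dot C}\dot C=-a\,\varphi\dot C$, \emph{not} $a^2\xi-a\,\varphi\dot C$ as you wrote. Your formula also contradicts Lemma \ref{Lemma 3.4}: by \eqref{3.2} one must have $g(\nabla_{\dot C}\dot C,\xi)=b=0$, whereas your expression gives $a^2\neq 0$. (It looks as if you folded the $\widetilde\nabla$--$\nabla$ correction $a^2\xi$ from \eqref{2.4} into $\nabla_{\dot C}\dot C$ and then added it a second time.) As a consequence your second paragraph produces $\widetilde\nabla_{E_1}E_1=2\xi-\tfrac1a\varphi\dot C$, which is \emph{not} the vector $E_2=\xi-\tfrac1a\varphi\dot C$ of \eqref{4.4}; its $\widetilde g$-norm is $4$ (since $\widetilde g(\xi,\xi)=1$, $\widetilde g(\xi,\varphi\dot C)=0$, $\widetilde g(\varphi\dot C,\varphi\dot C)=0$), so you would get $\widetilde k=2$, contradicting \eqref{4.6}. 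You then silently compute the norm of the correct $E_2$ (coefficient $1$ on $\xi$) and declare it equal to your $2\xi-\tfrac1a\varphi\dot C$, so the argument as written is internally inconsistent. The fix is exactly the paper's computation: $\nabla_{\dot C}\dot C=-a\varphi\dot C$ and $\widetilde\nabla_{\dot C}\dot C=\nabla_{\dot C}\dot C+a^2\xi=-a\varphi\dot C+a^2\xi$, whence $\widetilde\nabla_{E_1}E_1=-\tfrac1a\varphi\dot C+\xi=E_2$ and $\widetilde k=1$.

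A secondary point: the heart of part (i)/(ii) — showing $\widetilde\nabla_{\dot C}\varphi\dot C=0$ (via $(\nabla_{\dot C}\varphi)\dot C=a(a\xi+\varphi^2\dot C)$, $\varphi(\nabla_{\dot C}\dot C)=-a\varphi^2\dot C$, then the correction term of \eqref{2.4}) and $\widetilde\nabla_{\dot C}\xi=-\varphi\dot C$, from which $\widetilde\nabla_{C'}E_2=-\tfrac1{|a|}\varphi\dot C$ is split as $-\widetilde kE_1-\widetilde\tau E_3$ with $E_3=\tfrac1{|a|}(\varphi\dot C-\dot C)$ and $\widetilde\tau=1$ — is only announced as ``a direct but short computation should produce,'' not carried out. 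That computation is where the specific form of $E_3$ and the value $\widetilde\tau=1$ actually come from, so it needs to be done (and, given the sign slip above, done with the corrected $\nabla_{\dot C}\dot C$), together with the check $\widetilde\nabla_{C'}E_3=-\widetilde\tau E_2$ confirming osculating order $3$.
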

\begin{proof}
Since $b=0$ for $(C(t), {\bf \overline F})$, from Proposition \ref{Proposition 3.2} it follows that $a\neq 0$.
First, for further use we compute:
\begin{align}\label{4.7}
\begin{array}{lll}
\widetilde g(\dot C,\dot C)=g(\dot C,\varphi \dot C)+(\eta (\dot C))^2=a^2 ,\, \, \widetilde g(\dot C,\varphi \dot C)=g(\varphi \dot C,\varphi \dot C)=a^2,\\
\widetilde g(\varphi \dot C,\varphi \dot C)=-\widetilde g(\dot C,\dot C)+(\eta (\dot C))^2=0. 
\end{array}
\end{align}
The curvature $\bar k_1(t)=\epsilon _1k_1$ and the vector fields $\overline W$, $\overline N$ from the frame ${\bf \overline F}$ along $C(t)$ we obtain by substituting $b=0$ in \eqref{3.10} and \eqref{3.14},  \eqref{3.24}, respectively. Thus we  
\begin{align}\label{4.8}
\bar k_1=\epsilon _1\epsilon (-a^2) ,
\end{align}
\begin{align}\label{4.9}
\overline W=\epsilon _1\epsilon \frac{1}{a}\varphi \dot C ,
\end{align}
\begin{align}\label{4.10}
\overline N=\frac{1}{a}\xi -\frac{1}{2a^2}\dot C .
\end{align}
Since $\widetilde g(\dot C,\dot C)=a^2\neq 0$, analogously as in the proof of Theorem \ref{Theorem 4.1} we parameterize $C(t)$ with respect to its arc length parameter $\widetilde s=\vert a\vert t$. Then it is easy to see that for the vector field $E_1(\widetilde s)=C^\prime (\widetilde s)$ the equality $\widetilde g(E_1,E_1)=1$ holds. Hence $C(\widetilde s)$ is a spacelike curve with respect to $\widetilde g$. Also, we have
\begin{align*}
\widetilde \eta (C^\prime )=\widetilde g(C^\prime ,\xi )=\displaystyle\eta (C^\prime )=\frac{1}{\vert a\vert }\eta (\dot C)=\frac{a}{\vert a\vert }=\pm 1 .
\end{align*}
From the above equality it is clear that the spacelike curve $C(\widetilde s)$ is a slant (non-Legendre) curve in $M$.\\
(i) By virtue of \eqref{2.4} we find
\begin{align}\label{4.11}
\widetilde \nabla _{C^\prime }C^\prime =\frac{1}{a^2}\widetilde \nabla _{\dot C}\dot C=
\frac{1}{a^2}\left(\nabla _{\dot C}\dot C+a^2\xi \right).
\end{align}
Using \eqref{4.8} and \eqref{4.9} we get
\begin{align}\label{4.111}
\nabla _{\dot C}\dot C=\bar k_1\overline W=-a\varphi \dot C .
\end{align}
Substituting the latter equality in \eqref{4.11}, we obtain
\begin{align*}
\widetilde \nabla _{C^\prime }C^\prime =-\frac{1}{a}\varphi \dot C+\xi .
\end{align*}
Denoting $E_2(\widetilde s)=-\frac{1}{a}\varphi \dot C+\xi $, we check directly that $\widetilde g(E_2,E_2)=1$ and $\widetilde g(E_1,E_2)=0$. Then for the curvature $\widetilde k(s)$ we find
\begin{align*}
\widetilde k(\widetilde s)=\vert \widetilde \nabla _{C^\prime }C^\prime \vert =\sqrt{\displaystyle\vert \widetilde g(\widetilde \nabla _{C^\prime }C^\prime ,\widetilde \nabla _{C^\prime }C^\prime )\vert }=\sqrt{\displaystyle\vert \widetilde g(E_2,E_2)\vert }=1.
\end{align*}
Thus we have
\begin{align}\label{4.13}
\widetilde \nabla _{C^\prime }C^\prime =\widetilde kE_2 .
\end{align}
Now, we compute
\begin{align}\label{4.14}
\widetilde \nabla _{C^\prime }E_2=\frac{1}{\vert a\vert }\widetilde \nabla _{\dot C}\left(-\frac{1}{a}\varphi \dot C+\xi \right)=\frac{1}{\vert a\vert }\left(-\frac{1}{a}\widetilde \nabla _{\dot C}\varphi \dot C+\widetilde \nabla _{\dot C}\xi \right).
\end{align}
Further, by using \eqref{2.4} we get 
\begin{align}\label{4.15}
\widetilde \nabla _{\dot C}\varphi \dot C=\nabla _{\dot C}\varphi \dot C-
a^2\xi .
\end{align}
From the well known formula $(\nabla _{\dot C}\varphi )\dot C=\nabla _{\dot C}\varphi \dot C-
\varphi (\nabla _{\dot C}\dot C)$ we express
\begin{align}\label{4.16}
\nabla _{\dot C}\varphi \dot C=(\nabla _{\dot C}\varphi )\dot C+\varphi (\nabla _{\dot C}\dot C) .
\end{align}
By virtue of \eqref{2.2} we find
\begin{align*}
(\nabla _{\dot C}\varphi )\dot C=a(a\xi +\varphi ^2\dot C) .
\end{align*}
Taking into account \eqref{4.111} we have
\begin{align*}
\varphi (\nabla _{\dot C}\dot C)=-a\varphi ^2\dot C .
\end{align*}
Substituting the latter two equalities in \eqref{4.16} we obtain 
\begin{align}\label{4.17}
\nabla _{\dot C}\varphi \dot C=a^2\xi .
\end{align}
From \eqref{4.15} and \eqref{4.17} it follows
\begin{align}\label{4.18}
\widetilde \nabla _{\dot C}\varphi \dot C=0 .
\end{align}
By using \eqref{2.4} and \eqref{2.3} we get
\begin{align}\label{4.19}
\widetilde \nabla _{\dot C}\xi =\nabla _{\dot C}\xi =-\varphi \dot C .
\end{align}
Substituting \eqref{4.18} and \eqref{4.19} in \eqref{4.14} we receive
$\widetilde \nabla _{C^\prime }E_2=-\frac{1}{\vert a\vert }\varphi \dot C$. We rewrite the last equality in the following equivalent form 
\begin{align*}
\widetilde \nabla _{C^\prime }E_2=-\widetilde kE_1+\widetilde kE_1-\widetilde k\frac{1}{\vert a\vert }\varphi \dot C=-\widetilde kE_1-\widetilde k\left(\frac{1}{\vert a\vert }\varphi \dot C-E_1\right)
\end{align*}
and put $E_3(\widetilde s)=\frac{1}{\vert a\vert }\varphi \dot C-E_1=\frac{1}{\vert a\vert }(\varphi \dot C-\dot C)$. Immediately we verify that $\widetilde g(E_3,E_3)=-1$, $\widetilde g(E_1,E_3)=\widetilde g(E_2,E_3)=0$. Now, we obtain
\begin{align}\label{4.20}
\widetilde \nabla _{C^\prime }E_2=-\widetilde kE_1-\widetilde \tau E_3 ,
\end{align}
where $\widetilde \tau =\widetilde k$. Finally, we have
\begin{align*}
\widetilde \nabla _{C^\prime }E_3=\frac{1}{\vert a\vert }\widetilde \nabla _{\dot C}\frac{1}{\vert a\vert }(\varphi \dot C-\dot C)=\frac{1}{a^2}\left(\widetilde \nabla _{\dot C}\varphi \dot C-\widetilde \nabla _{\dot C}\dot C\right).
\end{align*}
Taking into account \eqref{4.13} and \eqref{4.18}, we infer 
\begin{align}\label{4.21}
\widetilde \nabla _{C^\prime }E_3=-\widetilde \tau E_2 .
\end{align}
The equalities \eqref{4.13}, \eqref{4.20} and \eqref{4.21} show that $C(\widetilde s)$ is a Frenet curve of osculating order 3. Note that in our case $\epsilon _1=\epsilon _2=-\epsilon _3=1$.\\
(ii) In (i) we obtained that $\widetilde k=\widetilde \tau =1$ which means that the Frenet slant curve $C$ with respect to the metric $\widetilde g$ is a proper helix.
\end{proof}
\section{Examples}\label{sec-6}
%\section{Slant null curves in a Lie group as a 3-dimensional Sasaki-like almost contact B-metric manifold and their matrix representation}
Let us consider ${\R}^2$ endowed with an almost complex structure $J$ and a  metric $h$  which are defined with respect to the local basis $\left\{\frac{\partial}{\partial x},\frac{\partial}{\partial y}\right\}$  as follows: 
\begin{equation*}
\begin{array}{ll}
\displaystyle
J\left(\frac{\partial}{\partial x}\right)=\frac{\partial}{\partial y}, \quad J\left(\frac{\partial}{\partial y}\right)=-\frac{\partial}{\partial x}, \\
\displaystyle
h\left(\frac{\partial}{\partial x},\frac{\partial}{\partial x}\right)=-h\left(\frac{\partial}{\partial y},\frac{\partial}{\partial y}\right)=-1 , \quad
h\left(\frac{\partial}{\partial x},\frac{\partial}{\partial y}\right)=0 .
\end{array}
\end{equation*}
The almost complex structure $J$ acts as an anti-isometry with respect to the metric $h$, i.e. $h(JX,JY)=-h(X,Y)$ and $h$ is a pseudo-Riemannian neutral metric (called Norden metric).
The associated neutral metric $\widetilde h$ is defined by $\widetilde h(X,Y)=h(X,JY)$. It is easy to check that the almost complex structure $J$ is parallel with respect to the Levi-Civita connection $\nabla ^h$ of the metric $h$, i.e. $\nabla ^hJ=0$. Hence, $N^2=({\R}^2,J,h)$ is a K\"ahler-Norden manifold. Then, according to \cite[Theorem 3.5]{IMM}, the product manifold $M^3=\R^+\times N^2$ equipped with the almost contact B-metric structure $(\varphi ,\xi ,\eta ,g)$ given by
\[
\xi =\frac{\partial}{\partial z}, \, \, \, \eta ={\rm d}z, \quad \varphi _{\vert N^2}=J, \, \, \, \varphi \frac{\partial}{\partial z}=0, \quad g={\rm d}z^2+\cos 2z \, h-\sin 2z \, \widetilde h
\]
(${\rm d}z$ is the coordinate 1-form on $\R^+$) is a Sasaki-like almost contact B-metric manifold. The vector fields
\[
e_1=\cos z\frac{\partial}{\partial x}+\sin z\frac{\partial}{\partial y}, \quad 
e_2=-\sin z\frac{\partial}{\partial x}+\cos z\frac{\partial}{\partial y}, \quad e_3=\frac{\partial}{\partial z}
\]
are linearly independent at each point of $M^3$ and they satisfy
\begin{equation}\label{6.1}
g(e_1,e_1)=-g(e_2,e_2)=g(e_3,e_3)=1, \, \, g(e_i,e_j)=0, i\neq j\in \{1,2,3\},
\end{equation}
\begin{equation}\label{6.2}
\left[e_1,e_2\right]=0, \quad \left[e_1,e_3\right]=-e_2, \quad \left[e_2,e_3\right]=e_1.
\end{equation}
%\begin{equation*}
%\begin{array}{ll}
%a&=b,\\
%\left[e_1,e_2\right]&=0.
%\end{array}
%\end{equation*}
Now, by using the Koszul formula
\begin{align}\label{5.2}
2g(\nabla _{e_i}e_j,e_k)=g([e_i,e_j],e_k)+g([e_k,e_i],e_j)+g([e_k,e_j],e_i)
\end{align}
we find the components of the Levi-Civita connection $\nabla $ of the metric $g$. The non-zero ones of them are
\begin{align}\label{5.5}
\nabla _{e_1}e_2=\nabla _{e_2}e_1=-e_3 , \quad \nabla _{e_1}e_3 =-e_2, \quad 
\nabla _{e_2}e_3=e_1.
\end{align}
With the help of \eqref{2.4} and \eqref{5.5} we obtain that the non-zero components of the Levi-Civita connection $\widetilde \nabla $ of the metric $\widetilde g$ are
\begin{align}\label{6.3}
\widetilde \nabla _{e_1}e_1=-\widetilde \nabla _{e_2}e_2=-e_3 , \quad \widetilde \nabla _{e_1}e_3 
=-e_2, \quad \widetilde \nabla _{e_2}e_3=e_1.
\end{align}
We consider the following curves in $M^3=\R^+\times N^2$ with respect to the basis 
$\{e_1,e_2,e_3\}$ of $TM^3$:
\par
{\bf (a)} $C(t)=(\cosh t,\sinh t,t)$, $t\in \R$. We find $\dot C=(\sinh t,\cosh t,1)$ and
$\varphi \dot C=(-\cosh t,\sinh t,0)$. Then $g(\dot C,\dot C)=0$, $a=\eta (\dot C)=1$, $b=g(\dot C,\varphi \dot C)=-\sinh 2t$, that is $C$ is a slant null curve such that the function $b$ is not a constant. By using \eqref{5.5} we obtain
\begin{align}\label{6.4}
\nabla _{\dot C}\dot C=2\cosh t e_1-\sinh 2t e_3 .
\end{align}
First, we take $S(TC^\bot )$ spanned by $W=\displaystyle\left(-\frac{\cosh t}{\cosh 2t},\frac{\sinh t}{\cosh 2t},\tanh 2t\right)$. Then the unique $N$ corresponding to $W$ is  given by\\
\[
N=\left(\frac{\sinh t(4\cosh ^2t-1)}{2\cosh ^22t},\frac{-\cosh t(4\sinh ^2t+1)}{2\cosh ^22t},
\frac{1}{2\cosh ^22t}\right).
\]
Now, \eqref{6.4} becomes
\[
\nabla _{\dot C}\dot C=\tanh 2t \dot C-2\cosh^2t W .
\]
By using \eqref{5.5} we obtain
\[
\nabla _{\dot C}N=-\frac{3+\cosh2t}{2\cosh^22t} W, \quad  \nabla _{\dot C}W=\frac{3+\cosh2t}{2\cosh^22t} \dot C+2\cosh ^2t N.
\]
Comparing the above three equations with \eqref{2.7}, \eqref{2.8} and \eqref{2.9} we get
\begin{equation*}
h=\tanh 2t , \qquad k_1=-2\cosh^2t , \qquad k_2=-\frac{3+\cosh2t}{2\cosh^22t}
\end{equation*}
with respect to the Frenet frame ${\bf F}=\{\dot {C}, N, W\}$. \\
Next, consider the Frenet frame
${\bf {\overline F}}=\{\dot {C}, \overline N, \overline W\}$, where
\begin{equation*} 
\overline W=\left(\frac{1}{\cosh t},0,-\tanh t\right), \quad \overline N\left(\frac{\sinh t}{2\cosh ^2t},
-\frac{1}{2\cosh t},\frac{1}{2\cosh ^2t}\right)
\end{equation*} 
we have $\nabla _{\dot C}\dot C=2\cosh ^2t \overline W$ and $\nabla _{\dot C}\overline N=\displaystyle\frac{1}{\cosh ^2t} \overline W$. Thus $\overline h=0$, \\
$\overline k_1=2\cosh ^2t $, $\overline k_2=\displaystyle\frac{1}{\cosh ^2t}$ with respect to the Frenet frame ${\bf {\overline F}}=\{\dot {C}, \overline N, \overline W\}$.
This example is an illustration of the results in Proposition \ref{Proposition 3.7}.
\par 
{\bf (b)} $C(t)=(C^*,t,-t)$, $t\in \R$, $C^*\in \R$ is a slant null curve for which $a=-1$ and $b=0$. 
From Proposition \ref{Proposition 3.14} it follows that $C$ is a $\varphi $-geodesic. Really, by virtue of \eqref{5.5} we find $\nabla _{\dot C}\dot C=-e_1=\varphi \dot C$. The frame
${\bf \overline F}=\{\dot C=(0,1,-1), \, \overline N=(0,-1/2,-1/2), \, \overline W=(-1,0,0)\}$ is a Cartan Frenet frame for $C$ and $\bar k_2=1/2$. Hence, $C$ is a generalized helix which confirms the assertion in Corollary \ref{Corollary  3.15}.\\
Now, we consider the curve $C(t)=(C^*,t,-t)$ with respect to the metric $\widetilde g$. 
By direct computations we obtain $\widetilde g(\dot C,\dot C)=1$ and $\widetilde \eta (\dot C)=-1$. Hence $C(t)$ is a unit speed spacelike curve and it is also a slant curve. By using \eqref{6.3} we get
$\widetilde \nabla _{\dot C}\dot C=e_3-e_1$. The vector fields $E_1=(0,1,-1)$, $E_2=(-1,0,1)$, 
$E_3=(-1,-1,1)$ are orthonormal with respect to $\widetilde g$ and $\widetilde g(E_1,E_1)=\widetilde g(E_2,E_2)=-\widetilde g(E_3,E_3)=1$. Hence, $C$ is of osculating order 3 and $\widetilde k=\widetilde \tau =1$. The obtained results for $C$ with respect to $\widetilde g$ concur with those in  Theorem \ref{Theorem 4.2}.
\par
{\bf (c)} $C(t)=(\sqrt{2}t/2, -\sqrt{2}t/2,C^*)$, $t\in \R$, $C^*\in \R$ is a Legendre null curve for which
$b=1$ and $\nabla _{\dot C}\dot C=e_3=\xi $. The frame
${\bf \overline F}=\{\dot C=(\sqrt{2}/2,-\sqrt{2}/2,0), \\ \overline N=(\sqrt{2}/2,\sqrt{2}/2,0), \, \overline W=(0,0,1)\}$ is a Cartan Frenet frame for $C$ and $\bar k_2=0$. Hence $C$ is a null cubic which confirms (iii) in Proposition \ref{Proposition 3.20}.\\
With respect to $\widetilde g$ we have $\widetilde g(\dot C,\dot C)=1$ and $\widetilde \eta (\dot C)=0$ that is $C$ is a unit speed spacelike Legendre curve. Moreover, by virtue of \eqref{6.3} we obtain
$\widetilde \nabla _{\dot C}\dot C=0$, i.e. $C$ is a geodesic.

Consider a 3-dimensional solvable Lie group $G$  with a Lie algebra $\mathfrak {g}$. Let  
$\mathfrak {g}$  be determined by the commutators of the basis  $\{e_1, e_2, e_3\}$ of left invariant vector fields given by \eqref{6.2}.
%\begin{align}\label{5.1}
%[E_1,E_3]=-E_2 \quad [E_2,E_3]=E_1, \quad [E_1,E_2]=0 . 
%\end{align}
In \cite{IMM} were defined the almost contact structure $(\varphi, \xi, \eta )$ and the left invariant B-metric $g$ on $G$ by
\begin{align*}
\begin{array}{llll}
\varphi e_1=e_2, \, \,  \varphi e_2=-e_1, \, \, \varphi e_3=0, \quad \xi =e_3, \, \, \eta (e_3)=1, 
\eta (e_1)=\eta (e_2)=0,\\
g(e_1,e_1)=-g(e_2,e_2)=g(e_3,e_3)=1, \quad
g(e_i,e_j)=0, \, i\neq j \in\{1,2,3\}
\end{array}
\end{align*}
and it was shown that $(G,\varphi,\xi,\eta,g)$ is a Sasaki-like almost contact B-metric manifold. 
Since $e_1,e_2,e_3$ satisfy \eqref{6.1} and \eqref{6.2}, the non-zero components of the Levi-Civita connection $\nabla $ of the metric $g$ are given by \eqref{5.5}.
%we obtain the following equality for the components $F_{ijk}=F(E_i,E_j,E_k)$,\\
%$i,j,k \in \{1,2,3\}$ of the tensor $F$:
%\begin{align}\label{5.3}
%\begin{array}{ll}
%2F_{ijk}=g([E_i,\varphi E_j]-\varphi [E_i,E_j],E_k)+g(\varphi [E_k,E_i]-[\varphi E_k,E_i],E_j)\\
%\quad\quad\,\,\,+g([E_k,\varphi E_j]-[\varphi E_k,E_j],E_i).
%\end{array}
%\end{align}
%By virtue of \eqref{5.1} and \eqref{5.3} we obtain that the non-zero components $F_{ijk}$ are
%\begin{align}\label{5.4}
%F_{113}=F_{131}=\alpha , \quad F_{223}=F_{232}=-\alpha .
%\end{align}
%For the tensor $F$ of a 3-dimensional $\F_4$-manifold, using \eqref{2.2}, we get 
%\begin{align*}
%F(X,Y,Z)=\frac{\theta (\xi )}{2}\{(X^1Y^1-X^2Y^2)Z^3+(X^1Z^1-X^2Z^2)Y^3\},
%\end{align*}
%where $X=X^iE_i$, $Y=Y^iE_i$, $Z=Z^iE_i$ are arbitrary vector fields. The latter equality and \eqref{5.4} imply that $(G,\varphi,\xi,\eta,g)$ is a 3-dimensional $\F_4$-manifold and $\alpha =\frac{\theta (\xi )}{2}$. 

Now, we construct slant null curves in the Sasaki-like almost contact B-metric manifold $(G,\varphi,\xi,\eta,g)$.
Consider the curve $C(t)=e^{tX}$ on $G$, where
$t\in {\R}$ and $X\in \mathfrak {g}$. Hence the tangent vector to $C(t)$ at the identity element $e$ of 
$G$ is $\dot {C}(0)=X$. Let the coordinates $(p,q,r)$ of $\dot C(0)$ with respect to the basis $\{e_1, e_2, e_3\}$ are given by
\begin{align}\label{5.6}
p=-\epsilon \sqrt{\frac{\sqrt{a^4+b^2}-a^2}{2}}, \quad q=\sqrt{\frac{\sqrt{a^4+b^2}+a^2}{2}},
\quad r=a,
\end{align}
where $a, b\in \R$, $(a,b)\neq (0,0)$ and $\epsilon =\{{\rm sign}\, b\}=\{\pm 1\}$. Since
$g(\dot {C},\dot {C})=0$ and $\eta (\dot C)=a$, $C(t)$ is a slant null curve in $(G,\varphi,\xi,\eta,g)$. Also, having in mind that $\varphi \dot C=(-q,p,0)$,
we have $g(\dot C,\varphi \dot C)=b$.  Furthermore, using \eqref{5.5}, one obtains
\begin{align}\label{5.7}
\begin{array}{lll}
\nabla _{\dot C}{\dot C}=aqe_1-ape_2+be_3 \\
\qquad \, \, \, =\displaystyle\sqrt{a^4+b^2}\left(\frac{aq}{\sqrt{a^4+b^2}}e_1-\frac{ap}{\sqrt{a^4+b^2}}e_2+\frac{b}{\sqrt{a^4+b^2}}e_3\right)\\
\qquad \, \, \, =\sqrt{a^4+b^2} \, \overline W,
\end{array}
\end{align}
where the vector field
\begin{align}\label{W_1}
\overline W=\left(\frac{aq}{\sqrt{a^4+b^2}},-\frac{ap}{\sqrt{a^4+b^2}},\frac{b}{\sqrt{a^4+b^2}}\right)
\end{align}
is a spacelike unit. 
Then the unique $\overline N$ corresponding to $\overline W$  is given by
\begin{align}\label{N_1}
\overline N=\left(-\frac{a^2p+2bq}{2(a^4+b^2)},\frac{-a^2q+2bp}{2(a^4+b^2)},\frac{a^3}{2(a^4+b^2)}\right).
\end{align}
Thus, by using \eqref{5.5}, we obtain
\begin{align}\label{5.8}
\begin{array}{ll}
\nabla_{\dot C}\overline N=\displaystyle\frac{a^2}{2\sqrt{a^4+b^2}}\overline W, \\
\nabla_{\dot C}\overline W=-\displaystyle\frac{a^2}{2\sqrt{a^4+b^2}}\dot C-
\sqrt{a^4+b^2}\, \overline N .
\end{array}
\end{align}
Comparing the equations \eqref{5.7} and \eqref{5.8} with \eqref{2.7} and \eqref{2.8}, \eqref{2.9}, respectively, we get
\begin{align}\label{5.88}
\bar h=0 , \qquad \bar k_1=\sqrt{a^4+b^2}, \qquad \bar k_2=\displaystyle\frac{a^2}
{2\sqrt{a^4+b^2}} 
\end{align}
with respect to to the Frenet frame ${\bf \overline F}=\{\dot C, \overline N, \overline W\}$.\\
Further, we find the matrix representation of $C(t)$ and ${\bf \overline F}$. 
Let us recall that the adjoint representation $\rm Ad$ of $G$ is the following Lie group homomorphism
\[
\rm {Ad} : G \longrightarrow Aut({\g}).
\]
For $X\in {\g}$, the map ${\rm {ad}}_X : {\g}\longrightarrow {\g}$ is defined by ${\rm {ad}}_X(Y)=[X,Y]$, where by ${\rm ad}_X$ is denoted  ${\rm {ad}}(X)$. Due to the Jacobi identity, the map
\[
\rm {ad} : {\g} \longrightarrow End({\g}) : X\longrightarrow ad_X
\]
is Lie algebra homomorphism, which is called  adjoint representation of ${\g}$.
Since the set ${\rm End}({\g})$ of all ${\K}$-linear maps from ${\g}$ to ${\g}$ is isomorphic to the set of all  $(n\times n)$ matrices ${\rm M}(n,{\K})$ with entries in ${\K}$, $\rm {ad}$ is a matrix representation of ${\g}$. We denote by $M_i$ the matrices of ${\rm ad}_{E_i}$ (i=1,2,3) with respect to the basis $\{e_1,e_2,e_3\}$ of ${\g}$.  Then for an arbitrary $X=x_1e_1+x_2e_2+x_3e_3$ ($x_1, x_2, x_3 \in {\R}$) in ${\g}$ the matrix $A$ of ${\rm ad}_X$ is $A=x_1M_1+x_2M_2+x_3M_3$.
Then by virtue of the well known identity $e^A={\rm {Ad}}\left(e^X\right)$ we find the matrix representation of the Lie group $G$. By using \eqref{6.2} we obtain $M_1$, $M_2$, $M_3$ and then $A$
\[
M_1=\left(\begin{array}{llr}
0 & 0 & 0 \cr
0 & 0 & -1\cr
0 & 0 & 0
\end{array}\right), \quad
M_2=\left(\begin{array}{lcr}
0 & 0 & 1 \cr
0 & 0 & 0 \cr
0 & 0 & 0
\end{array}\right), \quad
M_3=\left(\begin{array}{lrl}
0 & -1 & 0 \cr
1 & 0  & 0 \cr
0 & 0 & 0
\end{array}\right),
\]
\begin{align}\label{5.9}
A=\left(\begin{array}{crr}
0 & -x_3 & x_2 \cr
x_3 & 0  & -x_1  \cr
0 & 0 & 0
\end{array}\right).
\end{align}
The characteristic polynomial of A is
\[
P_A(\lambda )=-\lambda(\lambda ^2+x_3^2) =0 .
\]
Hence for the eigenvalues $\lambda _i \, (i = 1, 2, 3)$ of $A$ we have
\[
\lambda _1=0 , \quad \lambda _2=ix_3 , \quad \lambda _3=-ix_3 , \quad i^2=-1.
\]
By the assumption that $x_3\neq 0$, the eigenvectors
\[
p_1=(x_1,x_2,x_3), \quad p_2=(1,i,0),\quad p_3=(i,1,0)
\]
corresponding to $\lambda _1, \lambda _2, \lambda _3$, respectively, are linearly independent for arbitrary $x_1$, $x_2$ and $x_3\neq 0$. For the change of basis matrix P and its inverse matrix 
$P^{-1}$ we get
\[
P=\left(\begin{array}{rll}
x_1 & 1 & i \cr
x_2 & i & 1 \cr
x_3 & 0 & 0
\end{array}\right) , \quad 
P^{-1}=\frac{1}{2x_3}\left(\begin{array}{ccc}
0 & 0 & 2 \cr
x_3 & -ix_3 & -x_1+ix_2 \cr
-ix_3 & x_3 & -x_2+ix_1
\end{array}\right) .
\]
By using that $e^A = Pe^JP^{-1}$, where $J$ is the diagonal matrix with elements 
$J_{ii} =\lambda _i$, we obtain the matrix representation of the Lie group $G$ in case  $x_3\neq 0$
\begin{align}\label{5.10}
\small G=\left\{e^A=
\left(\begin{array}{crc}
\cos x_3 & -\sin x_3  & \frac{x_1}{x_3}(1-\cos x_3)+\frac{x_2}{x_3}\sin x_3  \cr \cr
\sin x_3 & \cos x_3  & \frac{x_2}{x_3}(1-\cos \alpha x_3)-\frac{x_1}{x_3}\sin x_3 \cr \cr
0 & 0 & 1
\end{array}\right)
\right\} .
\end{align}
The coordinates of the vector field $t\dot C\in {\g}$, $t\in \R$, are $(tp,tq,ta)$, where $p,q$ are given by \eqref{5.6} and $a\neq 0$. Since ${\rm Ad}(C(t))={\rm Ad}\left(e^{t\dot c}\right)$,
we find ${\rm Ad}(C(t))$ replacing $x_1$, $x_2$ and $x_3$ in \eqref{5.10} with
$tp$, $tq$ and $ta$, respectively. Thus, for the matrix representation of a slant null curve 
$C(t)$, which is not a Legendre curve, we have
\begin{align}\label{5.11}
{\rm Ad}(C(t))=
\left(\begin{array}{ccc}
\cos at & -\sin at  & \frac{p}{a}(1-\cos at)+\frac{q}{a}\sin at  \cr \cr
\sin at & \cos at  & \frac{q}{a}(1-\cos at)-\frac{p}{a}\sin at \cr \cr
0 & 0 & 1
\end{array}\right) .
\end{align}
We may obtain the  matrix representations of $\dot C$, $\overline W$ and $\overline N$ replacing $x_1$, $x_2$ and $x_3$ in \eqref{5.9} with their coordinates, determined by   \eqref{5.6},  \eqref{W_1} and  \eqref{N_1}, respectively.
\\
Now, by using Corollary \ref{Corollary 3.12} we will find a slant null curve in the Lie group $(G,\varphi,\xi,\eta,g)$ which is a generalized helix. Take $b=\epsilon \sqrt{1-a^4}$ in \eqref{5.6}, \eqref{W_1}, \eqref{N_1} and \eqref{5.88}, where 
$\epsilon =\{{\rm sign}\, b\}=\{\pm 1\}$ and  $a\in [-1,0)\cup (0,1]$. Then the slant null curve 
$C_1(t)$ has a tangent vector $\dot C_1(0)$ with coordinates $p_1,q_1,r_1$ given by 
\begin{align}\label{5.12}
p_1=-\epsilon \frac{\sqrt{1-a^2}}{\sqrt{2}}, \quad q_1=\frac{\sqrt{1+a^2}}{\sqrt{2}}, \quad r_1=a
\end{align}
and curvatures $\bar k_1=1$, $\bar k_2=\frac{a^2}{2}$. From Corollary \ref{Corollary 3.12} it follows that the slant null curve $C_1(t)$ is a generalized helix with respect to the Frenet frame ${\bf F_1}=\{\dot C_1, N_1, W_1\}$, where
\begin{align}\label{5.13}
\begin{array}{ll}
W_1=\left(\frac{a\sqrt{1+a^2}}{\sqrt{2}}, \epsilon \frac{a\sqrt{1-a^2}}{\sqrt{2}}, \epsilon \sqrt{1-a^4}\right),\\ \\
N_1=\left(-\epsilon \frac{\sqrt{1-a^2}(2+a^2)}{2\sqrt{2}}, -\frac{\sqrt{1+a^2}(2-a^2)}{2\sqrt{2}}, \frac{a^3}{2}\right) .
\end{array}
\end{align}
We obtain the matrix representation of $C_1(t)$ replacing $p$ and $q$ in \eqref{5.11} with $p_1$ and 
$q_1$, respectively, determined by \eqref{5.12}.

\end{document}